\setlist{nosep}
\theoremstyle{definition}
\newtheorem{defin}{Definition}[section]
\theoremstyle{plain}
\newtheorem{teor}[defin]{Theorem}
\newtheorem{lem}[defin]{Lemma}
\newtheorem{pro}[defin]{Proposition}
\newtheorem{cor}[defin]{Corollary}
\theoremstyle{definition}
\newtheorem{esm}[defin]{Example}
\newtheorem{osr}[defin]{Remark}
\numberwithin{equation}{section}
\numberwithin{equation}{section}
\renewcommand{\O}{\Omega}
\newcommand{\D}{\mathcal{D}}
\renewcommand{\H}{\mathcal{H}}
\newcommand{\B}{\mathcal{B}}
\newcommand{\Eo}{\mathcal{E}_\Omega}
\newcommand{\T}{\Theta}
\newcommand{\Up}{\Upsilon}
\renewcommand{\L}{\Lambda}
\newcommand{\Po}{\mathfrak{P}}
\newcommand{\n}[1]{\|#1\|}
\newcommand{\nor}{\|\cdot\|}
\newcommand{\noo}{\|\cdot\|_\O}
\renewcommand{\l}{\langle}
\renewcommand{\r}{\rangle}
\newcommand{\R}{\mathbb{R}}
\newcommand{\C}{\mathbb{C}}
\newcommand{\pint}{\l\cdot,\cdot\r}
\newcommand{\pin}[2]{\l#1 , #2\r}
\newcommand{\no}{\noindent}
\newcommand{\ol}{\overline}
\newcommand{\ull}{\underline} 
\newcommand{\Ma}{\mathscr{M}_{\ull{\alpha}}}
\newcommand{\Oa}{\O_{\ull{\alpha}}}
\newcommand{\sub}{\subseteq}
\newcommand{\mez}{\frac{1}{2}}
\renewcommand{\ll}{{\it l}}
\fillast \fontsize{12}{15}\scshape}{\thesection.}{0.8em}{}
\begin{document}

\thispagestyle{plain}

\begin{center}
	\large
	{\bf A KATO'S SECOND TYPE REPRESENTATION \\ THEOREM FOR SOLVABLE SESQUILINEAR FORMS} \\
	\vspace*{0.5cm}
	ROSARIO CORSO
\end{center}

\normalsize 
\vspace*{1cm}

\small

\begin{minipage}{11.8cm}
	{\scshape Abstract.} 
Kato's second representation theorem is generalized to solvable sesquilinear forms. These forms need not be  non-negative nor symmetric. \\
The representation considered holds for a subclass of solvable forms (called hyper-solvable), precisely for those whose domain is exactly the domain of the square root of the modulus of the associated operator. This condition always holds for closed semibounded forms, and it is also considered by several authors for symmetric sign-indefinite forms.\\
As a consequence, a one-to-one correspondence between hyper-solvable forms and operators, which generalizes those already known, is established.		
\end{minipage}

\vspace*{.5cm}

\begin{minipage}{11.8cm}
	{\scshape Keywords:} Kato's representation theorems, q-closed/solvable sesquilinear forms, Radon-Nikodym-like representations.
\end{minipage}

\vspace*{.5cm}

\begin{minipage}{11.8cm}
	{\scshape MSC (2010):} 47A07, 47A10.
	
\end{minipage}

% \maketitle	

	% 47B25  	Symmetric and selfadjoint operators (unbounded)
	% 47A07  	Forms (bilinear, sesquilinear, multilinear)
	% 47A10  	Spectrum, resolvent
	% 47A12  	Numerical range, numerical radius
	% 47A30  	Norms (inequalities, more than one norm, etc.)
	% 47A67  	Representation theory
	% 47B44  	Accretive operators, dissipative operators, etc.
	% 47B50 (1973-now) Operators on spaces with an indefinite metric
	% 46C50 (1991-now) Generalizations of inner products (semi-inner products, partial inner products, etc.)

\normalsize

\vspace*{1cm}

		\section{Introduction}
		A sesquilinear form $\O$ on a dense domain $\D$ of a Hilbert space $\H$ is called {\it q-closed} if $\D$ can be made into a reflexive Banach space $\D[\noo]$, continuously embedded in $\H$, and such that the form is bounded in it. This allows to define a {\it Banach-Gelfand triplet} $\D \hookrightarrow \H \hookrightarrow \D^\times$, where the arrows indicate continuous embeddings and $\D^\times$ is the conjugate dual space of $\D[\noo]$. We call $\O$ {\it solvable} if a perturbation of $\O$ with a bounded form $\Up$ on $\H$, defines a bounded operator, with bounded inverse, which acts on the triplet (the set of these perturbations is denoted by $\Po(\O)$). These sesquilinear forms have been studied by Di Bella and Trapani in \cite{Tp_DB} and by Trapani and the author in \cite{RC_CT}.
		
		As proved in \cite{Tp_DB}, for a solvable sesquilinear form $\O$ there exists a closed operator $T$, with dense domain $D(T)\subseteq \D$, such that the following representation holds
		\begin{equation}
		\label{rapp_O}
		\O(\xi,\eta)=\pin{T\xi}{\eta}, \qquad \forall \xi\in D(T),\eta \in \D.
		\end{equation}
		This extends the representation theorems for sesquilinear forms considered by many authors, as for instance by Kato \cite{Kato}, McIntosh \cite{McIntosh70}, Fleige {\it et al.} \cite{FHdeS,FHdeSW',FHdeSW}, Grubi\u{s}i\'{c} {\it et al.} \cite{GKMV} and Schmitz \cite{Schmitz} (for a more complete list see the references of \cite{RC_CT}). 
		
		For a non-negative closed form $\O$, with positive associated operator $T$, Kato also proved the so-called {\it second representation theorem}  \cite[Theorem VI.2.23]{Kato}: $\D=D(T^\mez)$, where $T$ is the operator appearing in (\ref{rapp_O}), and
		$$
		\O(\xi,\eta)=\pin{T^\mez\xi}{T^\mez\eta}, \qquad \forall \xi,\eta\in \D.
		$$
		In the case where $\O$ is a general sectorial closed form, Kato \cite{Kato_2} generalized the representation as 
		$$
		\O(\xi,\eta)=\pin{T^\mez \xi}{T^{*\mez} \eta}, \qquad \forall \xi,\eta \in \D,
		$$	
		where $T^\mez$ and $T^{*\mez}$ are fractional powers of $T$ and $T^*$ (see \cite{Kato_1}), respectively, under the assumption that $\D=D(T^\mez)=D(T^{*\mez})$. However, this latter condition does not always hold, as shown by McIntosh \cite{McIntosh72}. 
		
		McIntosh \cite{McIntosh70}, Fleige {\it et al.} \cite{Fleige,FHdeS}, Grubi\u{s}i\'{c} {\it et al.} \cite{GKMV} and Schmitz \cite{Schmitz} adapted the second representation theorem for symmetric sesquilinear forms they  considered. More precisely, in \cite{Fleige,FHdeS,GKMV,Schmitz} it is proved that, if $\D=D(|T|^\mez)$ and $T=U|T|$ is the polar decomposition of $T$, then
		$$
		\O(\xi,\eta)=\pin{U|T|^\mez\xi}{|T|^\mez\eta}, \qquad \forall \xi,\eta \in \D.
		$$
		
		In this paper we adapt Kato's second representation theorem to a solvable sesquilinear form $\O$ (not necessarily symmetric), represented by an operator $T$, and with domain $\D=D(|T|^\mez)$ (if this condition is satisfied then we say that $\O$ is {\it hyper-solvable}). 
		It emerges that the condition $\D=D(|T|^\mez)=D(|T^*|^\mez)$ is equivalent to $\D=D(|T|^\mez)$ and we also prove, if $\O$ is hyper-solvable, that
		\begin{equation}
		\label{rappr_v2}
		\O(\xi,\eta)=\pin{U|T|^\mez \xi}{|T^*|^\mez \eta}=\pin{|T^*|^\mez U\xi}{|T^*|^\mez \eta}, \qquad \forall \xi,\eta\in \D
		\end{equation}
		where $T=U|T|$ is the polar decomposition of $T$.
		
		The paper is organized as follows. In Section \ref{sec:solv} we give a brief overview on q-closed and solvable forms, while in Section \ref{sec:Rad-Nik} we introduce a so-called {\it Radon-Nikodym-like representation} for a general q-closed/solvable form, i.e. an expression 
		\begin{equation}
		\label{rappRN}
		\O(\xi,\eta)=\pin{QH\xi}{H\eta}, \qquad \forall \xi,\eta \in \D,
		\end{equation}
		where $Q\in \B(\H)$ and $H$ is a positive self-adjoint operator with $0\in \rho(H)$. Moreover, we show that $\Up\in \Po(\O)$ if, and only if, $Q+H^{-1}BH^{-1}$ is a bijection of $\H$, where  $B$ is the operator associated to $\Up$. \\
		The sesquilinear forms studied in \cite{GKMV} are exactly defined as in (\ref{rappRN}) with $Q$ symmetric. Following \cite{GKMV}, we can give another expression of the operator $T$ associated to $\O$ in (\ref{rappRN}). Indeed, the domain $D(T)$ of $T$ is equal to	$D(T)=\{\xi\in \D:QH\xi\in \D\}$ and $T=HQH$.

		In Section \ref{sec:2nd} we prove the special Radon-Nikodym-like representation that holds for hyper-solvable forms
		\begin{equation*}
		\label{rappr_v1}
		\O(\xi,\eta)=\pin{V|T+B|^\mez \xi}{|T+B|^\mez \eta}, \qquad \forall \xi,\eta \in \D,
		\end{equation*}
		where $B$ is the bounded operator associated to $\Up\in \Po(\O)$ and $V\in \B(\H)$. 	If, moreover, $0\in \rho(T)$ then there exists a unique bijection $W\in \B(\H)$ such that
		\begin{equation*}
		\label{rappr1}
		\O(\xi,\eta)=\pin{W|T|^\mez \xi}{|T|^\mez \eta}, \qquad \forall \xi,\eta \in \D.
		\end{equation*}
		We also prove (\ref{rappr_v2}) and with the aid of the Radon-Nikodym-like representation we adapt the criteria, contained in Theorem 3.2 and Lemma 3.6 of \cite{GKMV}, to ensure that a solvable form is hyper-solvable. 
		
		In Section \ref{sec:corr} we consider the problem of representation to the converse direction; that is, for an operator $T$ with certain properties we construct a solvable sesquilinear form (which is in particular hyper-solvable) with associated operator $T$. More precisely, we determine a one-to-one correspondence between all hyper-solvable sesquilinear forms $\O$ and all operators $T$ such that $D(|T|^\mez)=D(|T^*|^\mez)$ and $0\in \rho(T+B)$ for some $B\in \B(\H)$. %\cap \rho(T^*+B^*).
		This correspondence generalizes Theorem VI.2.7 of \cite{Kato} (for symmetric forms) and Theorem 5.2 of \cite{FHdeS}.

		\section{Q-closed and solvable sesquilinear forms}
		\label{sec:solv}
		
	 	\no In this paper $\H$ denotes a Hilbert space, with inner product $\pint$ and norm $\nor$, and $\D$ denotes a dense subspace of $\H$. If $\mathcal{E}$ is a Banach space, we will indicate by $\B(\mathcal{E})$ the set of all bounded operators from $\mathcal{E}$ into  itself.\\	
		
		\no Let $\O$ be a sesquilinear form defined on $\D$. The {\it adjoint} $\O^*$ of $\O$ is the form on $\D$ given by
		$$
		\O^*(\xi,\eta)=\ol{\O(\eta,\xi)} \qquad \forall \xi,\eta \in \D.
		$$
		We say that $\O$ is {\it symmetric} if $\O=\O^*$ and  {\it semi-bounded} if $\O(\xi,\xi)\geq \gamma \n{\xi}^2$ for some $\gamma \in \R$ and for all $\xi \in \D$ (in particular $\O$ is {\it non-negative} if $\gamma=0$). If there exists $M>0$ such that $|\O(\xi,\eta)|\leq M\n{\xi}\n{\eta}$ for all $\xi,\eta\in \D$ then $\O$ is said {\it bounded} (in the norm of $\H$).\\
		We denote by $\iota$ the sesquilinear form $\iota(\xi,\eta)=\pin{\xi}{\eta}$ for all $\xi,\eta\in \H$.\\

		\no We recall some definitions and properties concerning q-closed and solvable forms established in \cite{Tp_DB,RC_CT}.
				
		\begin{defin}[{\cite[Definition 5.2]{Tp_DB}}, {\cite[Proposition 3.2]{RC_CT}}]
			\label{def_q_chiusa}
			A sesquilinear form $\O$ on $\D$ is called {\it  q-closed with respect to} a norm on $\D$ which is denoted by  $\noo$ if
			\begin{enumerate}
				\item there exists $\alpha>0$ such that $\n{\xi}\leq \alpha \n{\xi}_\O$, for all $\xi \in \D$, i.e. the embedding  $\D[\noo]\to \H$ is continuous;
				\item $\Eo:=\D[\noo]$ is a reflexive Banach space;
				\item there exists $\beta >0$ such that $|\O(\xi,\eta)|\leq \beta\n{\xi}_\O\n{\eta}_\O$, for all  $\xi,\eta \in \D$, i.e. $\O$ is bounded on $\D[\noo]$.
			\end{enumerate}
			If $\Eo$ is a Hilbert space, then $\O$ is said to be {\it q-closed with respect to the inner product of $\Eo$}.
		\end{defin}

		\no Let $\O$ be a q-closed sesquilinear form with respect a norm $\noo$ on $\D$. We denote by $\Eo=\D[\noo]$ and by $\Eo^\times=\D^\times[\noo^\times]$ the conjugate dual space of $\Eo$, where $\noo^\times$ denotes the usual dual norm.\\
		We indicate the value of a conjugate linear functional $\Lambda\in \Eo^\times$ on an element $\xi \in \Eo$ by $\pin{\Lambda}{\xi}$. \\
		
		\no We denote by $\Po(\O)$ the set of bounded sesquilinear forms  $\Up$ on $\H$, such that
		\begin{enumerate}
			\item if $(\O+\Up)(\xi,\eta)=0$ for all $\eta \in \D$, then $\xi=0$;
			\item for all $\L\in \Eo^\times $ there exists $\xi\in \D$ such that 
			$$
			\pin{\L}{\eta}=(\O+\Up)(\xi,\eta), \qquad \forall \eta \in \D.
			$$
		\end{enumerate}

		\begin{defin}[{\cite[Definition 5.5]{Tp_DB}}]
			If the set $\Po(\O)$ is not empty, then $\O$ is said to be  {\it solvable with respect to}  $\noo$ (or {\it solvable with respect to the inner product} if  $\noo$ is a Hilbert norm).
		\end{defin}
	
		\begin{teor}[{\cite[Theorems 3.8, 4.4]{RC_CT}}]
			\label{th_q_cl_sol_norm_eq}	
			Let $\O$ be a q-closed (respectively solvable) sesquilinear form on $\D$ with respect to a norm $\nor_1$ and let $\nor_2$ be a norm on $\D$. Then, $\O$ is q-closed (respectively solvable) with respect to $\nor_2$ if, and only if, $\nor_1$ and $\nor_2$ are equivalent.
		\end{teor}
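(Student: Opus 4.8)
The plan is to prove Theorem~\ref{th_q_cl_sol_norm_eq} by reducing the ``solvable'' case to the ``q-closed'' case, and then settling the q-closed case directly from the definitions together with a standard open-mapping argument. First I would observe that one implication is essentially immediate: if $\nor_1$ and $\nor_2$ are equivalent norms on $\D$, then the axioms (1)--(3) of Definition~\ref{def_q_chiusa} transfer verbatim from one norm to the other --- continuity of the embedding into $\H$, reflexivity of the completion (two equivalent norms give the same Banach space up to topological isomorphism), and boundedness of $\O$ are all preserved under equivalence. For solvability, note moreover that the dual spaces $\Eo^\times$ coincide as sets with equivalent dual norms, so the conditions defining membership in $\Po(\O)$ are literally the same; hence $\Po(\O)$ does not change and solvability with respect to $\nor_1$ is equivalent to solvability with respect to $\nor_2$. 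So the content is entirely in the forward direction: q-closed (resp. solvable) w.r.t.\ both $\nor_1$ and $\nor_2$ implies the two norms are equivalent.

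For the forward direction in the q-closed case, the key point is that $\O$ itself, being bounded with respect to each of $\nor_1$ and $\nor_2$, provides a bridge. The natural strategy is to show that the identity map $\D[\nor_1]\to\D[\nor_2]$ is bounded (and then by symmetry its inverse is too, giving equivalence). The difficulty is that $\O$ need not be coercive, so boundedness of $\O$ alone does not control $\nor_i$. This is exactly where solvability, or rather the q-closed structure plus a perturbation, must enter --- and indeed I expect the honest statement is that the theorem genuinely uses the existence of a solvable perturbation even to handle the ``q-closed'' assertion, via the associated bounded-with-bounded-inverse operator on the Gelfand triplet. Concretely, for the solvable case: pick $\Up\in\Po(\O)$ (with respect to $\nor_1$, say). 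Then $\O+\Up$ induces a bounded bijection $\D[\nor_1]\to\Eo^\times$ with bounded inverse (this is the defining feature of solvability, recorded in \cite{Tp_DB,RC_CT}). One then shows the same operator $\O+\Up$, viewed through the $\nor_2$-triplet, is bounded $\D[\nor_2]\to\D^\times[\nor_2^\times]$; combining the $\nor_1$-invertibility with the $\nor_2$-boundedness and a closed-graph/open-mapping argument forces $\nor_1\lesssim\nor_2$, and swapping the roles of the two norms gives the reverse inequality.

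The main obstacle, as I see it, is making the closed-graph step rigorous without circularity: one wants to apply the closed graph theorem to the identity $j:\D[\nor_1]\to\D[\nor_2]$, so one must verify its graph is closed, i.e.\ that if $\xi_n\to\xi$ in $\nor_1$ and $\xi_n\to\zeta$ in $\nor_2$ then $\xi=\zeta$. Since both norms dominate $\nor$ (axiom (1)), both limits agree in $\H$, hence $\xi=\zeta$ in $\H$ and therefore in $\D$; so the graph is closed for free, and the closed graph theorem applies because $\D[\nor_1]$ and $\D[\nor_2]$ are Banach spaces. This actually shows that \emph{any} two norms making $\D$ complete and dominating $\nor$ are automatically equivalent --- a clean observation that makes the whole theorem nearly formal once one notices it. So the real work is just: (i) record that both q-closedness and solvability are preserved under passing to an equivalent norm (routine), and (ii) invoke the closed-graph argument above for the converse; the role of axioms (2) (reflexivity/completeness) and (1) (domination of $\nor$) is precisely to legitimize step (ii). I would write the proof in that order, flagging that the reflexivity hypothesis is used only insofar as it guarantees completeness.
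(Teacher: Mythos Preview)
The paper does not prove this statement; it is quoted as \cite[Theorems 3.8, 4.4]{RC_CT} and stated without proof. So there is no ``paper's own proof'' to compare against here.

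Your proposal is correct, though it takes a detour before arriving at the right idea. The final closed-graph argument in your last paragraph is the whole proof: both $\D[\nor_1]$ and $\D[\nor_2]$ are Banach spaces (axiom~(2) of Definition~\ref{def_q_chiusa}) and both norms dominate $\nor$ (axiom~(1)); hence the identity map $\D[\nor_1]\to\D[\nor_2]$ has closed graph (any sequence converging in either norm converges in $\H$, forcing the limits to agree), so by the closed graph theorem it is bounded, and by symmetry the norms are equivalent. As you correctly note, this uses neither $\O$ itself nor reflexivity beyond completeness. The earlier discussion of invoking the $\O+\Up$ operator and its invertibility on the triplet is a red herring --- that machinery is not needed, and indeed the q-closed case must be handled \emph{without} assuming $\Po(\O)\neq\varnothing$. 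The backward direction (equivalent norms preserve all the axioms, and leave $\Po(\O)$ unchanged since the duals coincide) is routine, as you say. If you were to write this up, I would drop the middle paragraph entirely and go straight from the easy direction to the closed-graph argument.
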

		
		\begin{osr}
			By the previous theorem and for simplicity of notation sometimes we will not specify a norm with respect to which a sesquilinear form is q-closed or solvable (if no ambiguity may arise).
		\end{osr}

\begin{teor}[{\cite[Theorem 4.6]{RC_CT}}]
\label{th_rapp_risol}
Let $\O$ be a solvable sesquilinear form on $\D$ with respect to a norm $\noo$. Then there exists a closed operator $T$, with dense domain $D(T)\subseteq \D$ in $\H$, such that the following statements hold.
\begin{enumerate}
	\item $\O(\xi,\eta)=\pin{T\xi}{\eta},$ for all $\xi\in D(T),\eta \in \D$.
	\item $D(T)$ is dense in $\D[\noo]$.
	\item If $\Up \in \Po(\O)$ and $B\in \B(\H)$ is the bounded operator such that
	$\Up(\xi,\eta)=\pin{B\xi}{\eta}$  for all $\xi, \eta \in \D$,
	then $0\in \rho(T+B)$. In particular, if $\Up=-\lambda \iota$, with $\lambda \in \C$, then $\lambda \in \rho(T)$, the resolvent set of  $T$.
\end{enumerate}
The operator $T$ is uniquely determined by the following condition. Let $\xi,\chi\in \H$. Then $\xi\in D(T)$ and $T\xi=\chi$ if and only if $\xi\in \D$ and $\O(\xi,\eta)=\pin{\chi}{\eta}$ for all $\eta$ belonging to a dense subset of $\D[\noo]$.
%			Let $\O$ be a solvable sesquilinear form on $\D$ with respect to a norm $\noo$. Then there exists a closed operator $T$, with dense domain $D(T)\subseteq \D$ in $\H$, such that
%			\begin{equation}
%			\label{eq_rapp}
%			\O(\xi,\eta)=\pin{T\xi}{\eta}, \qquad \forall \xi\in D(T),\eta \in \D.
%			\end{equation}
%			Moreover,
%		\begin{enumerate}
%		\item $D(T)$ is dense in $\D[\noo]$;
%		\item if $\Up \in \Po(\O)$ and $B\in \B(\H)$ is the bounded operator such that
%		$\Up(\xi,\eta)=\pin{B\xi}{\eta}$  for all $\xi, \eta \in \D$,
%		then $0\in \rho(T+B)$. In particular, if $\Up=-\lambda \iota$, with $\lambda \in \C$, then $\lambda \in \rho(T)$, the resolvent set of  $T$;
%		\item if $T'$ is an operator with domain $D(T')\subseteq \D$ and
%	$
%	%\label{th_rapp_T'}
%	\O(\xi,\eta)=\pin{T'\xi}{\eta}
%	$
%		for all $\xi\in D(T')$ and $\eta$ which belongs to a dense subset of $\D[\noo]$, then $T' \subseteq T$.
%	%	If, in addition,  $\Up \in \Po(\O)$, $B\in \B(\H)$ is the bounded operator such that
%	%	$\Up(\xi,\eta)=\pin{B\xi}{\eta}$ for all $\xi, \eta \in \H$ and $T'+B$ has range $\H$, then $T'=T$.
%	\end{enumerate}
		\end{teor}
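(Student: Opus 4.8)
The plan is to realise $T$ as a restriction to $\H$ of the inverse of the operator that $\O+\Up$ induces on the Banach-Gelfand triplet, for a conveniently chosen $\Up\in\Po(\O)$, and then to correct this operator by the bounded operator representing $\Up$. Concretely, I would fix $\Up\in\Po(\O)$ (possible since $\O$ is solvable) and let $B\in\B(\H)$ be the bounded operator with $\Up(\xi,\eta)=\pin{B\xi}{\eta}$ for all $\xi,\eta\in\D$. As $\Up$ is bounded in the norm of $\H$ and $\D[\noo]\hookrightarrow\H$ continuously, both $\Up$ and $\O+\Up$ are bounded on $\Eo=\D[\noo]$, so $\O+\Up$ induces a bounded operator $\mathcal{A}\colon\Eo\to\Eo^\times$ via $\pin{\mathcal{A}\xi}{\eta}=(\O+\Up)(\xi,\eta)$ for $\xi,\eta\in\D$. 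The two conditions in the definition of $\Po(\O)$ say precisely that $\mathcal{A}$ is injective and, respectively, surjective; since $\Eo$ and $\Eo^\times$ are Banach spaces, the open mapping theorem then makes $\mathcal{A}$ a topological isomorphism of $\Eo$ onto $\Eo^\times$.

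Next I would identify $\H$ with its canonical image in $\Eo^\times$ (the embedding $\H\hookrightarrow\Eo^\times$ of the triplet, $\chi\mapsto\pin{\chi}{\cdot}$, which is bounded and injective because $\D$ is dense in $\H$) and take $S$ to be the restriction of $\mathcal{A}$ to $D(S):=\{\xi\in\Eo:\mathcal{A}\xi\in\H\}=\mathcal{A}^{-1}(\H)$, so that $S\xi:=\mathcal{A}\xi\in\H$. Then $S\colon D(S)\to\H$ is a bijection whose inverse, $\chi\mapsto\mathcal{A}^{-1}\chi$ viewed as an element of $\Eo\subseteq\H$, is the composition of the bounded maps $\H\hookrightarrow\Eo^\times$, $\mathcal{A}^{-1}\colon\Eo^\times\to\Eo$ and $\Eo\hookrightarrow\H$, hence bounded on $\H$; therefore $S$ is closed with $0\in\rho(S)$. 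Setting $T:=S-B$, the boundedness of $B$ makes $T$ closed with $D(T)=D(S)\subseteq\Eo=\D$, and for $\xi\in D(T)$ and $\eta\in\D$ one computes $\pin{T\xi}{\eta}=\pin{S\xi}{\eta}-\pin{B\xi}{\eta}=(\O+\Up)(\xi,\eta)-\Up(\xi,\eta)=\O(\xi,\eta)$, which is (1). Since $T+B=S$ we get $0\in\rho(T+B)$, i.e.\ (3); in particular, if $\Up=-\lambda\iota$ then $B=-\lambda I$ and $0\in\rho(T-\lambda I)$, that is $\lambda\in\rho(T)$.

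For (2) I would first recall that $\H$ is dense in $\Eo^\times$ --- a standard feature of Banach-Gelfand triplets, which here follows from the reflexivity of $\Eo$ and the injectivity of the embedding $\Eo\hookrightarrow\H$. Since $\mathcal{A}^{-1}\colon\Eo^\times\to\Eo$ is a homeomorphism, $D(T)=\mathcal{A}^{-1}(\H)$ is then dense in $\Eo=\D[\noo]$, and, $\D[\noo]\hookrightarrow\H$ having dense range, $D(T)$ is dense in $\H$ as well. This proves (2) and, together with the closedness already noted, that $T$ is a closed operator with dense domain contained in $\D$.

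Finally, for the uniqueness clause: if $\xi,\chi\in\H$ satisfy $\xi\in\D$ and $\O(\xi,\eta)=\pin{\chi}{\eta}$ for all $\eta$ in a dense subset of $\D[\noo]$, then, both sides being continuous in $\eta$ with respect to $\noo$, the identity extends to all $\eta\in\D$; adding $\Up(\xi,\cdot)=\pin{B\xi}{\cdot}$ yields $\mathcal{A}\xi=\chi+B\xi\in\H$, so $\xi\in D(S)=D(T)$ and $T\xi=S\xi-B\xi=\chi$. The converse is immediate from (1) (take the dense subset to be $\D$ itself), and a routine density argument shows that the set of such pairs $(\xi,\chi)$ is the graph of a well-defined operator, which therefore does not depend on the choice of $\Up\in\Po(\O)$. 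The step I expect to be the crux is establishing that $\mathcal{A}$ is an isomorphism of $\Eo$ onto $\Eo^\times$ and that $\H$ is dense in $\Eo^\times$ --- both rest on the reflexivity built into the definition of q-closed forms; once $\mathcal{A}^{-1}$ is available, the remaining manipulations (restriction to $\H$, the correction by $B$, and the continuity and density arguments) are routine.
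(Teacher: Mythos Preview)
The paper does not give its own proof of this theorem: it is quoted verbatim from \cite[Theorem 4.6]{RC_CT} and used as a black box. So there is no in-paper argument to compare against; I can only assess your proposal on its own terms.

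Your approach is the standard one (and is, in essence, the argument one finds in \cite{Tp_DB,RC_CT}): build the operator $\mathcal{A}\colon\Eo\to\Eo^\times$ from $\O+\Up$, use the two defining conditions of $\Po(\O)$ to see that $\mathcal{A}$ is a bijection, invoke the open mapping theorem, restrict to $\H$ inside the triplet, and then subtract $B$. The verification of (1), (2), the closedness of $T$, and the uniqueness clause are all handled correctly; the density of $\H$ in $\Eo^\times$ via reflexivity is exactly the right ingredient for (2).

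There is one point you should make explicit. Your construction establishes (3) only for the particular $\Up$ you fixed at the outset, whereas the statement asserts $0\in\rho(T+B')$ for \emph{every} $\Up'\in\Po(\O)$ with associated operator $B'$. You do have the tool to close this gap: the uniqueness clause you prove shows that $T$ is characterised intrinsically by $\O$, so running the same construction with $\Up'$ in place of $\Up$ produces the same $T$ and exhibits $T+B'$ as the corresponding invertible operator $S'$. State this explicitly; as written, the reader has to supply the step.
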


	\begin{osr}
		A densely defined and closed operator satisfying condition 1 of Theorem \ref{th_rapp_risol} needs not verify the other properties.  
		%The condition 1 of Theorem \ref{th_rapp_risol} does not guarantee the uniqueness even taking a densely defined and closed. 
		Indeed, with the notations of \cite[Sect. 1.3]{Schm}, let $\H=L^2(a,b)$, $\D=\{f\in H^1(a,b):f(a)=f(b)\}$, where $a,b\in \R$, and let $S$ be the self-adjoint operator defined by
		$S f=-i f'$ for $f\in\D$. \\
		Then the sesquilinear form $\O$ on $\D$ given by
		$$
		\O(f,g)=\pin{Sf}{Sg}=\int_a^b f'(x)\ol{g'}(x)dx, \qquad f,g\in \D
		$$
		is densely defined, non-negative and closed. In particular, $\iota\in \Po(\O)$.
		
		Let $A$ be the densely defined, closed and positive operator $Af=-f''$ for $f$ in 
		$ D(A)=H^2_0(a,b)%=\{f\in H^2(a,b):f(a)=f(b)=f'(a)=f'(b)=0\}
		$. Hence,
		$\O(f,g)=\pin{Af}{g}$ for all $f\in D(A),g\in \D$. 
		Therefore, $A$ verifies point 1 of Theorem \ref{th_rapp_risol} but not point 3, because $A$ is not self-adjoint.
	\end{osr}

	 The operator $T$ in Theorem \ref{th_rapp_risol} is called {\it associated} to $\O$.
	 The next result is the converse of statement {\it 2} of Theorem \ref{th_rapp_risol}.
	
	\begin{teor}
		\label{th_Po_ris}
		Let $\O$ be a solvable sesquilinear form on $\D$ with associated operator $T$.
		A bounded form $\Up(\cdot,\cdot)=\pin{B\cdot}{\cdot}$ belongs to $\Po(\O)$ if, and only if, $0\in \rho(T+B)$. 
		In particular,  $\Up=-\lambda \iota$, with $\lambda \in \C$, belongs to $\Po(\O)$ if and only if $\lambda \in \rho(T)$, the resolvent set of $T$.
	\end{teor}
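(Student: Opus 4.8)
The implication from $\Up\in\Po(\O)$ to $0\in\rho(T+B)$ is precisely part 3 of Theorem \ref{th_rapp_risol}, so the plan is to prove the converse. Assume $0\in\rho(T+B)$. Since $\O$ is solvable, $\Po(\O)$ is nonempty; I would fix $\Up_0\in\Po(\O)$ with associated operator $B_0\in\B(\H)$, for which Theorem \ref{th_rapp_risol} gives $0\in\rho(T+B_0)$. The goal is then to verify the two defining properties of $\Po(\O)$ for $\Up$, reducing each to an invertibility statement about the closed operators $T+B$ and $T+B_0$.

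Property 1 should be quick: if $(\O+\Up)(\xi,\eta)=0$ for all $\eta\in\D$, then $\O(\xi,\eta)=\pin{-B\xi}{\eta}$ for all $\eta\in\D$, so the uniqueness clause of Theorem \ref{th_rapp_risol} (with $\D$ itself serving as the dense subset of $\D[\noo]$) forces $\xi\in D(T)$ and $(T+B)\xi=0$; injectivity of $T+B$ then gives $\xi=0$.

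For property 2 the plan is to invert $\O+\Up_0$ on the Gelfand triplet and absorb $\Up-\Up_0$ as a bounded perturbation. Using properties 1 and 2 of $\Po(\O)$ for $\Up_0$, I would define a map $R\colon\Eo^\times\to\Eo$ by letting $R\L$ be the unique $\xi\in\D$ with $(\O+\Up_0)(\xi,\eta)=\pin{\L}{\eta}$ for all $\eta\in\D$; $R$ is linear and, by the closed graph theorem, bounded. Writing $j\chi\in\Eo^\times$ for the functional $\eta\mapsto\pin{\chi}{\eta}$, the uniqueness clause of Theorem \ref{th_rapp_risol} identifies $Rj$ with $(T+B_0)^{-1}$ as maps $\H\to\Eo$. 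For $\L\in\Eo^\times$, a short computation then shows that $\xi\in\D$ solves $(\O+\Up)(\xi,\eta)=\pin{\L}{\eta}$ for all $\eta\in\D$ if and only if $(I+K)\xi=R\L$, where $K:=Rj(B-B_0)|_\Eo=(T+B_0)^{-1}(B-B_0)|_\Eo$ is bounded on $\Eo$. Hence property 2 reduces to surjectivity (indeed bijectivity) of $I+K$ on $\Eo$.

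The main obstacle is this last step. I would handle it by factoring $K$ through $\H$ and invoking the elementary fact that $I+AB$ is invertible exactly when $I+BA$ is: this makes bijectivity of $I+K$ on $\Eo$ equivalent to bijectivity of $I+(T+B_0)^{-1}(B-B_0)$ on $\H$. On $D(T)$ one has the operator identities $I+(T+B_0)^{-1}(B-B_0)=(T+B_0)^{-1}(T+B)$ and $I-(T+B)^{-1}(B-B_0)=(T+B)^{-1}(T+B_0)$, which, using $0\in\rho(T+B)$ and $0\in\rho(T+B_0)$, exhibit these two bounded operators as mutually inverse on the dense subspace $D(T)$ of $\H$, hence on all of $\H$. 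Therefore $I+K$ is bijective, $\xi:=(I+K)^{-1}R\L$ solves property 2, and $\Up\in\Po(\O)$. The "in particular" statement is the case $\Up=-\lambda\iota$, i.e. $B=-\lambda I$, since $0\in\rho(T-\lambda I)$ precisely when $\lambda\in\rho(T)$. Throughout, the delicate part is the triplet bookkeeping (the maps $R$, $j$ and the restrictions to $\Eo$) together with checking that the passage between $I+AB$ and $I+BA$ faithfully transports the problem from $\Eo$ to $\H$, where the resolvent-type identities close it.
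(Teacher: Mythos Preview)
Your argument is correct, but the paper's proof of the nontrivial implication is considerably shorter and avoids the operator $R$, the $I+AB$ versus $I+BA$ trick, and the resolvent identities altogether. Given $\Lambda\in\Eo^\times$, the paper simply picks any $\Phi\in\Po(\O)$ (your $\Up_0$), finds $\chi\in\D$ with $\pin{\Lambda}{\eta}=(\O+\Phi)(\chi,\eta)$, and then rewrites this as $(\O+\Up)(\chi,\eta)+(\Phi-\Up)(\chi,\eta)$. Since $\Phi-\Up$ is bounded on $\H$ and $0\in\rho(T+B)$, one can choose $\xi\in D(T)$ with $(T+B)\xi$ equal to the vector representing $(\Phi-\Up)(\chi,\cdot)$, so that $(\Phi-\Up)(\chi,\eta)=(\O+\Up)(\xi,\eta)$; then $\chi+\xi$ solves condition~2. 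That is the whole proof---no closed-graph argument, no factorization, no density-and-continuity step.

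What your approach buys is an explicit description of the solution operator $(\O+\Up)^{-1}$ acting on $\Eo^\times$: you show it equals $(I+K)^{-1}R$ with $K=(T+B_0)^{-1}(B-B_0)|_\Eo$, which is potentially useful information beyond the bare existence statement. The cost is the triplet bookkeeping you flagged as delicate (one small point: the $AB/BA$ trick with $A=(T+B_0)^{-1}\colon\H\to\Eo$ and the inclusion-followed-by-$(B-B_0)$ as the other factor actually lands you at $I+(B-B_0)(T+B_0)^{-1}$ on $\H$, not $I+(T+B_0)^{-1}(B-B_0)$; both are bijective under the hypothesis, but you should match the factorization to the operator you then analyse). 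Note also that the paper does not bother to write out the verification of condition~1, treating it as immediate; your short argument for it via the uniqueness clause of Theorem~\ref{th_rapp_risol} is the natural one.
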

	\begin{proof}
		We only have to show one implication.  Assume that $B\in\B(\H)$ and $0\in \rho(T+B)$. Let $\Lambda \in \Eo^\times$. Then, by hypothesis, there exist a bounded form $\Phi\in \Po(\O)$ and $\chi\in \D$ such that $\pin{\Lambda}{\eta}=(\O+\Phi)(\chi,\eta)$, for all $\eta \in \D$. Therefore, 
		$$
		\pin{\Lambda}{\eta}=(\O+\Up)(\chi,\eta)+(\Phi-\Up)(\chi,\eta), \qquad \forall \eta \in\D.
		$$
		Since $\Phi-\Up$ is a bounded form on $\H$ and $0\in \rho(T+B)$, there exists $\xi\in D(T)$ such that $(\Phi-\Up)(\chi,\eta)=\pin{(T+B)\xi}{\eta}=(\O+\Up)(\xi,\eta)$, for all $\eta \in \D$. In conclusion we have,
		$$
		\pin{\Lambda}{\eta}=(\O+\Up)(\chi+\xi,\eta), \qquad \forall \eta \in\D,
		$$
		hence $\Up\in \Po(\O)$.
	\end{proof}

%		\begin{pro}[{\cite[Corollary 4.14]{RC_CT}}]
%			\label{cor_auto<->simm}
%			The operator associated to a solvable sesquilinear form is self-adjoint if, and only if, the form is symmetric.
%		\end{pro}

		\begin{cor}
			Let $\O$ be a symmetric solvable sesquilinear form. Then $-\lambda\iota\in \Po(\O)$ for all $\lambda \in  \C\backslash \R$.
		\end{cor}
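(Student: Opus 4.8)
The plan is to invoke Theorem \ref{th_Po_ris}, which reduces the assertion to proving that $\lambda\in\rho(T)$ for every $\lambda\in\C\setminus\R$, $T$ being the operator associated with $\O$; I will obtain this by showing that $T$ is self-adjoint. That $T$ is at least symmetric is immediate: since $\O=\O^*$, statement 1 of Theorem \ref{th_rapp_risol} gives $\pin{T\xi}{\xi}=\O(\xi,\xi)=\ol{\O(\xi,\xi)}\in\R$ for all $\xi\in D(T)$, hence, by polarization, $\pin{T\xi}{\eta}=\pin{\xi}{T\eta}$ for all $\xi,\eta\in D(T)$, i.e. $T\subseteq T^*$.

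The crucial intermediate fact is: if $\Up\in\Po(\O)$ then $\Up^*\in\Po(\O)$ as well, and if $\Up=\pin{B\cdot}{\cdot}$ with $B\in\B(\H)$ then $\Up^*=\pin{B^*\cdot}{\cdot}$. To see it, observe that a bounded form $\Up$ belongs to $\Po(\O)$ precisely when the bounded operator $\mathcal{A}_\Up\colon\Eo\to\Eo^\times$ given by $\pin{\mathcal{A}_\Up\xi}{\eta}=(\O+\Up)(\xi,\eta)$ is bijective: conditions 1 and 2 in the definition of $\Po(\O)$ are exactly its injectivity and surjectivity, and continuity of the inverse is then automatic from the open mapping theorem, $\Eo$ being a reflexive Banach space and $\Eo^\times$ its conjugate dual. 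A direct computation, which uses $\O=\O^*$, shows that---up to the natural identification $\Eo^{\times\times}=\Eo$---the Banach-space adjoint of $\mathcal{A}_\Up$ is $\mathcal{A}_{\Up^*}$; since the adjoint of an invertible bounded operator is invertible, $\mathcal{A}_{\Up^*}$ is bijective, i.e. $\Up^*\in\Po(\O)$.

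With this in hand, fix $\Up_0\in\Po(\O)$ (available since $\O$ is solvable) and let $B_0$ be its operator. Applying statement 3 of Theorem \ref{th_rapp_risol} to $\Up_0$ and to $\Up_0^*$ yields $0\in\rho(T+B_0)$ and $0\in\rho(T+B_0^*)$; from the latter, $0\in\rho\big((T+B_0^*)^*\big)=\rho(T^*+B_0)$. Since $T\subseteq T^*$ we have $T+B_0\subseteq T^*+B_0$, and both are bijections of $\H$; such operators must coincide (given $\eta\in D(T^*)$, pick $\xi\in D(T)$ with $(T+B_0)\xi=(T^*+B_0)\eta$ by surjectivity of $T+B_0$, then $\eta=\xi\in D(T)$ by injectivity of $T^*+B_0$). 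Hence $D(T)=D(T^*)$ and $T=T^*$; being self-adjoint, $T$ has $\sigma(T)\subseteq\R$, so $\lambda\in\rho(T)$ for every $\lambda\in\C\setminus\R$, and Theorem \ref{th_Po_ris} gives $-\lambda\iota\in\Po(\O)$.

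I expect the step $\Up\in\Po(\O)\Rightarrow\Up^*\in\Po(\O)$ to be the only genuine difficulty: it requires setting up carefully the duality of $\Eo$, $\H$ and $\Eo^\times$, computing the adjoint of the form operator, and using reflexivity to pass from bijectivity to a topological isomorphism. Everything else---symmetry of $T$, the perturbation bookkeeping, and the two appeals to Theorems \ref{th_rapp_risol} and \ref{th_Po_ris}---is routine.
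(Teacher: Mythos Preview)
Your proof is correct. Both you and the paper follow the same high-level strategy: show that the associated operator $T$ is self-adjoint and then invoke Theorem~\ref{th_Po_ris}. The paper's proof, however, is a one-line citation: it quotes \cite[Corollary~4.14]{RC_CT} for the self-adjointness of $T$ and is done. What you have written is essentially a self-contained proof of that external result. Your key step---$\Up\in\Po(\O)\Rightarrow\Up^*\in\Po(\O)$ for symmetric $\O$---is the symmetric specialization of \cite[Theorem~4.11]{RC_CT} (also invoked elsewhere in the paper), and your concluding argument (from $0\in\rho(T+B_0)\cap\rho(T^*+B_0)$ and $T\subseteq T^*$ deduce $T=T^*$) is the standard way to upgrade symmetry to self-adjointness once one has matching resolvent points. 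So the route is not different, just unpacked; the benefit of your version is that it does not rely on results outside the present paper, at the cost of reproducing machinery that the companion paper already supplies.
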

	\begin{proof}
		It is a direct consequence of \cite[Corollary 4.14]{RC_CT} and Theorem \ref{th_Po_ris}.
	\end{proof}

		Kato proved a representation theorem for so-called {\it closed sectorial} (densely defined) sesquilinear forms (\cite[Ch. VI]{Kato}). 
		In the symmetric case, these forms (i.e. semi-bounded forms) coincide exactly with symmetric solvable forms.
	
		\begin{pro}
			\label{pro_semi_solv->clos}
			Let $\O$ be a densely defined semi-bounded form. Then, $\O$ is closed in Kato's sense if, and only if, it is solvable.
		\end{pro}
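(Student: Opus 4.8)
The plan is to prove the two implications separately, in both cases working with the \emph{form norm} $\nor_+$ on $\D$ defined by $\n{\xi}_+^2:=\O(\xi,\xi)+(1-\gamma)\n{\xi}^2$, where $\gamma\in\R$ is a lower bound of $\O$. Two remarks are in order. First, a semi-bounded form is necessarily symmetric, since its quadratic form is real-valued and the polarization identity then gives $\O=\O^*$. Second, $\nor_+$ is a genuine Hilbert norm (because $\n{\xi}_+^2\ge\n{\xi}^2$), associated with the inner product $\pin{\xi}{\eta}_+:=\O(\xi,\eta)+(1-\gamma)\pin{\xi}{\eta}$, and recall (see \cite[Ch.~VI]{Kato}) that $\O$ is closed in Kato's sense precisely when $\D[\nor_+]$ is complete.

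For \emph{closed $\Rightarrow$ solvable}: if $\D[\nor_+]$ is complete, then it is a reflexive Banach space continuously embedded in $\H$, and $\O$ is bounded on it since $\O(\xi,\eta)=\pin{\xi}{\eta}_+-(1-\gamma)\pin{\xi}{\eta}$; hence $\O$ is q-closed with respect to $\nor_+$. Taking the bounded form $\Up:=(1-\gamma)\iota$ on $\H$ one has $(\O+\Up)(\xi,\eta)=\pin{\xi}{\eta}_+$, so condition 1 in the definition of $\Po(\O)$ is immediate (put $\eta=\xi$) and condition 2 is exactly the Riesz representation theorem in the Hilbert space $\D[\nor_+]$. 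Hence $\Up\in\Po(\O)$ and $\O$ is solvable.

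For \emph{solvable $\Rightarrow$ closed} I would first examine the associated operator $T$ (Theorem \ref{th_rapp_risol}); let $\noo$ be a norm witnessing solvability and $\Eo=\D[\noo]$. Since $\O$ is symmetric, $T$ is closed and symmetric; by the corollary above together with Theorem \ref{th_Po_ris}, $\C\back\R\sub\rho(T)$, so both deficiency indices of $T$ vanish and $T$ is self-adjoint, while $\pin{T\xi}{\xi}=\O(\xi,\xi)\ge\gamma\n{\xi}^2$ on $D(T)$ forces $T\ge\gamma$. Put $H:=(T+(1-\gamma)I)^\mez$, a positive self-adjoint operator with $H\ge I$, $0\in\rho(H)$ and $D(H^2)=D(T)$, so that $\n{H\xi}=\n{\xi}_+$ for $\xi\in D(T)$. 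As $\gamma-1\in\rho(T)$, Theorem \ref{th_Po_ris} again gives $\Up:=(1-\gamma)\iota\in\Po(\O)$; therefore the map $L:\Eo\to\Eo^\times$, $\pin{L\xi}{\eta}:=(\O+\Up)(\xi,\eta)$, is a bounded bijection, hence a topological isomorphism by the open mapping theorem, and for $\xi\in D(T)$ it reads $L\xi=H^2\xi\in\H\hookrightarrow\Eo^\times$.

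It then suffices to show that $\noo$ and $\nor_+$ are equivalent on $\D$, for completeness of $\Eo$ will then pass to $\D[\nor_+]$, i.e.\ $\O$ will be closed. The inequality $\n{\xi}_+\le C\n{\xi}_\O$ on $\D$ is soft, following from the boundedness of $\O$ on $\Eo$ and the continuity of $\Eo\hookrightarrow\H$. For the reverse inequality I would argue first on the core $D(T)$, which is dense in $\Eo$ (Theorem \ref{th_rapp_risol}): for $\xi\in D(T)$,
\[
\n{\xi}_\O\le\n{L^{-1}}\,\n{L\xi}_{\Eo^\times}=\n{L^{-1}}\,\n{H^2\xi}_{\Eo^\times}\le C\,\n{L^{-1}}\,\n{H\xi}=C\,\n{L^{-1}}\,\n{\xi}_+ ,
\]
where the inequality $\n{H^2\xi}_{\Eo^\times}\le C\n{H\xi}$ is obtained by computing the dual norm over the dense set $D(T)$ and using the factorization $\pin{H^2\xi}{\eta}=\pin{H\xi}{H\eta}$ for $\eta\in D(T)$ together with $\n{H\eta}=\n{\eta}_+\le C\n{\eta}_\O$. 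A density argument (approximate $\xi\in\D$ by elements of $D(T)$ in $\noo$ and apply the soft inequality to the differences) then extends $\n{\xi}_\O\le C\,\n{L^{-1}}\,\n{\xi}_+$ to all of $\D$. The main obstacle is exactly this last estimate: $\n{L^{-1}}$ bounds $\noo$ by the dual norm $\n{L\xi}_{\Eo^\times}$, which a priori looks too strong to be dominated by $\n{\xi}_+$; the point is that on $D(T)$ the solving operator coincides with $H^2$, and the factorization $\pin{H^2\xi}{\eta}=\pin{H\xi}{H\eta}$ moves one power of $H$ onto the test vector, where it is absorbed by the bound $\n{H\eta}\le C\n{\eta}_\O$ already available there.
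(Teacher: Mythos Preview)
Your argument is correct. The forward implication (closed $\Rightarrow$ solvable) is the standard form-norm argument and coincides with what any proof must do. For the reverse direction, however, your route differs from the paper's: the paper, having obtained self-adjointness of $T$, simply picks a real $\lambda<\gamma$ in $\rho(T)$, invokes Theorem~\ref{th_Po_ris} to get $-\lambda\iota\in\Po(\O)$, and then appeals to \cite[Proposition~7.1]{RC_CT} to conclude closedness in one stroke. You instead prove by hand that $\noo$ and $\nor_+$ are equivalent, exploiting the bounded invertibility of the solving operator $L:\Eo\to\Eo^\times$ together with the factorization $\pin{H^2\xi}{\eta}=\pin{H\xi}{H\eta}$ to transfer one power of $H$ onto the test vector. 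Your approach is more self-contained (it avoids the external black box from \cite{RC_CT}) and, in effect, re-derives the relevant content of that proposition in the present setting; the paper's version is shorter because it outsources this step. Both arguments use the same spine---self-adjointness of $T$ and the spectral gap below $\gamma$---and are equally valid.
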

		\begin{proof}
			Let $\gamma \in \R$ such that $\O(\xi,\xi)\geq \gamma \n{\xi}^2$, for all $\xi\in \D$. 
			Suppose that $\O$ is solvable. Then, by \cite[Corollary 4.14]{RC_CT}, the operator $T$ associated to $\O$ is self-adjoint. This implies that there exists $\lambda \in \rho(T)\cap \R$ such that $\lambda<\gamma$, and $-\lambda \iota \in \Po$ by Theorem \ref{th_Po_ris}. Hence, the statement is proved applying \cite[Proposition 7.1]{RC_CT}.
		\end{proof}

		\section{Radon-Nikodym-like representation theorem}
		\label{sec:Rad-Nik}
		
		In the next section we will introduce the Kato's second type representation for solvable sesquilinear forms (Theorem \ref{2_repr_th_2}). A crucial hypothesis of this theorem is the condition that the domain of a solvable sesquilinear form coincides exactly with the domain of the square root of the modulus of the associated operator. 
		
		Differently, in the present section we give a representation of general q-closed and solvable sesquilinear forms with respect to an inner product. \\
		We start recalling a lemma which derives from the Heinz inequality and an application of it.
		
		\begin{lem}[{\cite[Corollary 1.3]{Curgus}}]
			\label{P1,2^alfa}
			If $P_1$ and $P_2$ are positive self-adjoint operators on $\H$ and $D(P_1)=D(P_2)$, then, for all $0\leq \alpha \leq 1$,
			$$
			D(P_1^\alpha)=D(P_2^\alpha),
			$$
			and the corresponding graph norms on $D(P_1^\alpha)=D(P_2^\alpha)$ are equivalent.
		\end{lem}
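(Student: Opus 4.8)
The plan is to reduce the statement to the case of boundedly invertible operators, where the Heinz inequality applies cleanly, and then transfer the conclusion back. First I would replace $P_i$ by $P_i+I$, which is again positive self-adjoint, is now boundedly invertible, and satisfies $D(P_i+I)=D(P_i)$. Since $(P_2+I)^{-1}$ maps $\H$ onto $D(P_2)=D(P_1)=D(P_1+I)$, the operator $(P_1+I)(P_2+I)^{-1}$ is everywhere defined; being the composition of the closed operator $P_1+I$ with the bounded operator $(P_2+I)^{-1}$ it is closed, hence bounded by the closed graph theorem. The same holds with the indices exchanged. Setting $C:=\n{(P_1+I)(P_2+I)^{-1}}$ and $C':=\n{(P_2+I)(P_1+I)^{-1}}$ one gets
\[
\n{(P_1+I)\xi}\le C\,\n{(P_2+I)\xi},\qquad \n{(P_2+I)\xi}\le C'\,\n{(P_1+I)\xi},\qquad \forall\,\xi\in D(P_1)=D(P_2).
\]

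Second, I would feed these norm comparisons into the Heinz inequality, in the form: if $A,B$ are positive self-adjoint on $\H$ with $D(A)\sub D(B)$ and $\n{B\xi}\le\n{A\xi}$ for all $\xi\in D(A)$, then $D(A^\alpha)\sub D(B^\alpha)$ and $\n{B^\alpha\xi}\le\n{A^\alpha\xi}$ for all $\xi\in D(A^\alpha)$ and all $0\le\alpha\le 1$. Applying this with $A=C(P_2+I)$ and $B=P_1+I$ gives $D((P_2+I)^\alpha)\sub D((P_1+I)^\alpha)$ together with $\n{(P_1+I)^\alpha\xi}\le C^\alpha\,\n{(P_2+I)^\alpha\xi}$; the symmetric choice gives the reverse inclusion and the reverse bound. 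Hence $D((P_1+I)^\alpha)=D((P_2+I)^\alpha)$ for every $0\le\alpha\le 1$, with equivalent graph norms.

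Finally I would transfer this identification back to $P_i^\alpha$. Using the spectral theorem, $P_i=\int_0^\infty\lambda\,dE^{(i)}_\lambda$, and for $\lambda\ge 0$, $0\le\alpha\le 1$ one has the elementary bounds $\lambda^\alpha\le(1+\lambda)^\alpha\le 1+\lambda^\alpha$ (the last being subadditivity of $t\mapsto t^\alpha$). Writing $\mu_\xi^{(i)}$ for the measure $\n{E^{(i)}_\lambda\xi}^2$ and applying the triangle inequality in $L^2(\mu_\xi^{(i)})$ yields
\[
\n{P_i^\alpha\xi}\le\n{(P_i+I)^\alpha\xi}\le\n{\xi}+\n{P_i^\alpha\xi},\qquad \forall\,\xi\in D((P_i+I)^\alpha),
\]
so $D(P_i^\alpha)=D((P_i+I)^\alpha)$ with equivalent graph norms. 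Chaining $D(P_1^\alpha)=D((P_1+I)^\alpha)=D((P_2+I)^\alpha)=D(P_2^\alpha)$ and composing the three norm equivalences gives the claim. The only non-elementary input is the Heinz inequality itself; the substance of the argument is merely the bookkeeping that converts the equality of domains into the norm-comparison hypothesis that the Heinz inequality requires, so I do not anticipate a genuine obstacle.
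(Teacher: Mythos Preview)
The paper does not supply its own proof of this lemma: it is quoted as \cite[Corollary~1.3]{Curgus}, and the only hint given is the sentence preceding the statement, ``We start recalling a lemma which derives from the Heinz inequality''. Your argument is correct and is precisely the standard derivation from the Heinz inequality that the paper alludes to; there is nothing to compare it against beyond that remark.
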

		
		\begin{cor}
			\label{cor_sqrt}
			Let $T$ be a closed and densely defined operator on $\H$ and $B\in \B(\H)$. Then $D(|T|^\mez)=D(|T+B|^\mez)$ and the graph norms of $|T|^\mez$ and $|T+B|^\mez$ are equivalent. Moreover, if $0\in \rho(T+B)$, then these norms are also equivalent to the norm defined by
			$$
			\xi \mapsto \n{|T+B|^\mez \xi}, \qquad  \xi \in D(|T|^\mez).
			$$
		\end{cor}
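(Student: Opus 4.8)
The plan is to deduce Corollary \ref{cor_sqrt} from Lemma \ref{P1,2^alfa} by checking that its two hypotheses — positivity and self-adjointness of the operators, and equality of their domains — are met for a suitable pair. The natural choice is $P_1=|T|$ and $P_2=|T+B|$. Both are positive self-adjoint operators by the definition of the modulus of a closed densely defined operator (via the polar decomposition, $|S|=(S^*S)^{1/2}$). So the only substantive point is to show $D(|T|)=D(|T+B|)$, after which Lemma \ref{P1,2^alfa} with $\alpha=\mez$ immediately gives $D(|T|^\mez)=D(|T+B|^\mez)$ together with equivalence of the graph norms.

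For the domain equality $D(|T|)=D(|T+B|)$, I would argue as follows. One has $D(|S|)=D(S)$ for any closed densely defined $S$ (standard: $|S|$ and $S$ have the same domain since $\n{Sx}=\n{|S|x}$ and $|S|$ is the positive square root of $S^*S$, whose form domain issues are handled by closedness). Hence it suffices to show $D(T)=D(T+B)$, and this is trivial: since $B\in\B(\H)$ is everywhere defined and bounded, $\xi\in D(T)$ if and only if $T\xi$ makes sense, which happens if and only if $(T+B)\xi=T\xi+B\xi$ makes sense, so $D(T+B)=D(T)$ verbatim. Thus $D(|T|)=D(T)=D(T+B)=D(|T+B|)$, and Lemma \ref{P1,2^alfa} applies, yielding the first assertion of the corollary.

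For the last assertion, assume $0\in\rho(T+B)$. Then $T+B$ is a bijection of $D(T+B)$ onto $\H$ with bounded inverse, so $|T+B|$ is a bijection of $D(|T+B|)$ onto $\H$ with bounded inverse, i.e. $0\in\rho(|T+B|)$, and likewise $0\in\rho(|T+B|^\mez)$ by the spectral mapping theorem for the square root of a positive self-adjoint operator. Consequently the map $\xi\mapsto\n{|T+B|^\mez\xi}$ is already a norm on $D(|T+B|^\mez)=D(|T|^\mez)$ equivalent to the graph norm of $|T+B|^\mez$: indeed $\n{|T+B|^\mez\xi}\le(\text{graph norm})$ trivially, while $\n{\xi}=\n{|T+B|^{-\mez}|T+B|^\mez\xi}\le\n{|T+B|^{-\mez}}\,\n{|T+B|^\mez\xi}$ gives the reverse bound and hence control of the graph norm. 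Combining this with the equivalence of the graph norms of $|T|^\mez$ and $|T+B|^\mez$ already established finishes the proof.

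The argument is essentially bookkeeping; the only place that requires a moment's care is the identity $D(|S|)=D(S)$ for a closed densely defined $S$, which I would cite from a standard reference on polar decomposition rather than reprove, and the spectral-mapping step $0\in\rho(P)\Rightarrow 0\in\rho(P^\mez)$ for positive self-adjoint $P$, which follows since $P^{-1}\in\B(\H)$ forces $\inf\sigma(P)>0$, hence $\inf\sigma(P^\mez)>0$. I do not anticipate a genuine obstacle here, since all the heavy lifting (the Heinz-inequality interpolation) is outsourced to Lemma \ref{P1,2^alfa}.
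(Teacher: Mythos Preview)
Your proposal is correct and is exactly the argument the paper has in mind: the corollary is stated without proof immediately after Lemma \ref{P1,2^alfa}, and the intended deduction is precisely to apply that lemma with $P_1=|T|$, $P_2=|T+B|$ and $\alpha=\tfrac12$, using $D(|T|)=D(T)=D(T+B)=D(|T+B|)$. The final equivalence when $0\in\rho(T+B)$ is likewise the routine observation you give.
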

		
		\begin{teor}
			\label{th_H1_H2}
			Let $H_1$, $H_2$ be two positive self-adjoint operators with the same domain $\D$ and such that $0\in \rho(H_1)\cap \rho(H_2)$. Let $Q\in \B(\H)$ and consider the sesquilinear form 
			\begin{equation}
			\label{form_QH1H2}
			\O(\xi,\eta)=\pin{QH_1\xi}{H_2\eta}, \qquad  \xi,\eta\in \D.
			\end{equation}
			Then, $\O$ is q-closed (with respect to an inner product).\\
			Moreover, if $Q$ is an isomorphism of $\H$, then $\O$ is solvable, $0\in \Po(\O)$ and its associated operator is $T=H_2QH_1$
			defined in the natural domain $D(T)=\{\xi\in \D: QH_1\xi \in \D\}$.
		\end{teor}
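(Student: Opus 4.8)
The plan is to realise the Banach--Gelfand structure of $\O$ explicitly through $H_1$. Since $0\in\rho(H_1)\cap\rho(H_2)$, each $H_i$ is a bijection of $\D$ onto $\H$ with bounded inverse; moreover $K:=H_2H_1^{-1}$ is everywhere defined on $\H$ and closed (the composition of the closed operator $H_2$ with the bounded $H_1^{-1}$), hence $K\in\B(\H)$ by the closed graph theorem, and $K$ is a bijection of $\H$ with $K^{-1}=H_1H_2^{-1}\in\B(\H)$. I would then equip $\D$ with the Hilbert norm $\n{\xi}_\O:=\n{H_1\xi}$ coming from the inner product $(\xi,\eta)\mapsto\pin{H_1\xi}{H_1\eta}$. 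With this choice $H_1$ is a linear isometry of $\D[\noo]$ onto $\H$, so $\D[\noo]$ is complete, hence a reflexive (in fact Hilbert) space; the embedding $\D[\noo]\to\H$ is continuous because $\n{\xi}\le\n{H_1^{-1}}\,\n{\xi}_\O$; and $\O$ is bounded on $\D[\noo]$ since $|\O(\xi,\eta)|\le\n{Q}\,\n{H_1\xi}\,\n{H_2\eta}=\n{Q}\,\n{H_1\xi}\,\n{KH_1\eta}\le\n{Q}\,\n{K}\,\n{\xi}_\O\,\n{\eta}_\O$. This verifies Definition \ref{def_q_chiusa}, so $\O$ is q-closed with respect to an inner product; by Theorem \ref{th_q_cl_sol_norm_eq} the same conclusion holds with any equivalent Hilbert norm, e.g. $(\n{H_1\xi}^2+\n{H_2\xi}^2)^\mez$.

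Assume now that $Q$ is an isomorphism of $\H$. To get solvability I would check that the zero form lies in $\Po(\O)$. For the first defining property: if $\O(\xi,\eta)=\pin{QH_1\xi}{H_2\eta}=0$ for all $\eta\in\D$, then surjectivity of $H_2$ gives $QH_1\xi=0$, hence $\xi=0$ by injectivity of $Q$ and of $H_1$. For the second property, let $\L\in\Eo^\times$; the Riesz representation theorem in the Hilbert space $\Eo=\D[\noo]$ provides $v\in\D$ with $\pin{\L}{\eta}=\pin{H_1v}{H_1\eta}$ for all $\eta\in\D$. Putting $\zeta=H_1\eta$ (which runs over all of $\H$) and $H_2\eta=K\zeta$, the identity $\pin{\L}{\eta}=\O(\xi,\eta)$ is equivalent to $\pin{K^{*}QH_1\xi}{\zeta}=\pin{H_1v}{\zeta}$ for all $\zeta\in\H$, i.e. to $K^{*}QH_1\xi=H_1v$; as $K^{*}$, $Q$ and $H_1$ are bijections, this is solved by $\xi:=H_1^{-1}Q^{-1}(K^{*})^{-1}H_1v\in\D$. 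Hence $0\in\Po(\O)$ and $\O$ is solvable.

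It remains to identify the associated operator $T$ of Theorem \ref{th_rapp_risol}. Let $S:=H_2QH_1$ with the natural domain $D(S)=\{\xi\in\D:QH_1\xi\in\D\}$. If $\xi\in D(S)$ then $QH_1\xi$ and $\eta$ both lie in $D(H_2)=\D$, so the symmetry of $H_2$ gives $\O(\xi,\eta)=\pin{QH_1\xi}{H_2\eta}=\pin{H_2QH_1\xi}{\eta}$ for every $\eta\in\D$; in particular this holds on a dense subset of $\D[\noo]$, so the uniqueness clause of Theorem \ref{th_rapp_risol} yields $\xi\in D(T)$ and $T\xi=H_2QH_1\xi$, that is $S\sub T$. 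Next, $S$ is a bijection of $D(S)$ onto $\H$: given $\chi\in\H$, the vector $\xi:=H_1^{-1}Q^{-1}H_2^{-1}\chi$ satisfies $QH_1\xi=H_2^{-1}\chi\in\D$, so $\xi\in D(S)$ with $S\xi=\chi$, and injectivity of $S$ is immediate from that of $H_1$, $Q$, $H_2$. Finally, since $0\in\Po(\O)$, statement 3 of Theorem \ref{th_rapp_risol} (with $\Up=0$) gives $0\in\rho(T)$, so $T$ is injective; combined with $S\sub T$ and the surjectivity of $S$ onto $\H$, this forces $D(T)=D(S)$ and $T=S=H_2QH_1$.

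I expect the only genuinely delicate step to be this last identification of $D(T)$: one cannot simply write $\O(\xi,\eta)=\pin{H_2QH_1\xi}{\eta}$ for \emph{all} $\xi\in\D$, since $QH_1\xi$ need not belong to $D(H_2)=\D$, so $T=H_2QH_1$ must be read with the natural domain and the equality of domains must be extracted, as above, by playing the bijectivity of $T$ (coming from $0\in\Po(\O)$) against the surjectivity of $S$. Everything else --- the verification of Definition \ref{def_q_chiusa} and the linear algebra with $H_1$, $H_2$, $Q$ and $K$ --- is routine once the invertibility of these operators is recorded.
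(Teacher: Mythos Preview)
Your proof is correct and follows essentially the same route as the paper: equip $\D$ with a Hilbert norm coming from one of the $H_i$, use the closed graph theorem to control the other (your $K=H_2H_1^{-1}$ is exactly the equivalence of $\n{H_1\cdot}$ and $\n{H_2\cdot}$ the paper invokes), check $0\in\Po(\O)$ via Riesz, and identify $T$ by showing $H_2QH_1\sub T$ together with the resolvent condition $0\in\rho(T)$. The only cosmetic difference is that the paper applies Riesz in the $H_2$-inner product, which makes the solvability step read $QH_1\xi=H_2\chi$ directly and avoids your detour through $K^*$; your version is equally valid.
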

		\begin{proof}
			%By Lemma \ref{P1,2^alfa} the graph norms of $H_1$ and $H_2$ are equivalent and they are also equivalent to 
			By the closed graph theorem the norms given by
			$$
			\n{\xi}'_\O=\n{H_1\xi} \;\text{ and }\;
			\n{\xi}_\O=\n{H_2\xi} \qquad  \xi \in \D
			$$
			are equivalent.
			Hence, $\Eo:=\D[\noo]$ is a Hilbert space and there exists $\alpha >0$ such that $\n{\xi}\leq \alpha \n{\xi}_\O$, for all $\xi\in \D$.		
			Moreover, we have, for all $\xi,\eta \in \D$,
			\begin{eqnarray*}
			|\O(\xi,\eta)|&=&|\pin{QH_1\xi}{H_2\eta}| \\
			&\leq& \n{Q}\n{H_1\xi}\n{H_2 \eta} \\
			&=& \n{Q}\n{\xi}'_\O\n{\eta}_\O \\
			&\leq& \beta\n{Q}\n{\xi}_\O\n{\eta}_\O
			\end{eqnarray*}
			for some constant $\beta>0$. Therefore, $\O$ is q-closed with respect to the norm $\noo$. Now assume that $Q$ is an isomorphism. We will prove that  $\Up\in \Po(\O)$, where $\Up=0$. Denote with $\pint_\O$ the inner product which induces $\noo$. If $\O(\xi,\eta)=0$, for all $\eta \in \D$, i.e.
			$$
			\pin{QH_1\xi}{H_2\eta}=0, \qquad \forall \eta\in \D,
			$$
			then $QH_1\xi=0$ since $0\in \rho(H_2)$, and hence $\xi=0$ by the invertibility of $QH_1$. \\
			Let $\Lambda\in \Eo^\times$, then by Riesz's Lemma there exists $\chi \in \D$ such that $\pin{\Lambda}{\eta}=\pin{\chi}{\eta}_\O$, for all $\eta\in \D$, i.e.
			$$
			\pin{\Lambda}{\eta}=\pin{H_2\chi}{H_2\eta}, \qquad \forall \eta \in \D.
			$$
			Since $QH_1$ is invertible, there exists $\xi\in\D$ such that $QH_1 \xi =H_2 \chi$, so we finally have
			$$
			\O(\xi,\eta)=\pin{QH_1\xi}{H_2\eta}=\pin{H_2\chi}{H_2\eta}=\pin{\Lambda}{\eta}, \qquad \forall \eta \in \D.
			$$
			This proves that $\O$ is solvable. Put $T'=H_2QH_1$
			on the natural domain $D(T')=\{\xi\in \H: QH_1\xi \in \D\}$. Then $0\in \rho(T')$ and
			$$
			\O(\xi,\eta)=\pin{QH_1 \xi}{H_2 \eta}=\pin{T' \xi}{\eta}, \qquad \forall \xi\in D(T'),\eta\in \D.
			$$
			Therefore, by Theorem \ref{th_rapp_risol}, $T'$ is a restriction of the operator $T$ associated to $\O$ and $0\in\rho(T)$. Hence, $T'=T$.
		\end{proof}
		
		\begin{osr}
			Lemma 3.1 generalizes Theorem 2.3 of \cite{GKMV} in which the authors consider $H_1=H_2$ and $Q$ symmetric. 					
		\end{osr}
				
		\begin{cor}
			Let $H_1$, $H_2$ be two positive self-adjoint operators with the same domain $\D$ and such that $0\in \rho(H_1)\cap \rho(H_2)$. Let $Q,B\in \B(\H)$ be such that $Q+H_2^{-1}BH_1^{-1}$ is an isomorphism of $\H$. Then, the sesquilinear form 
			$$
			\O(\xi,\eta)=\pin{QH_1\xi}{H_2\eta}, \qquad \xi,\eta\in \D,
			$$ 
			is solvable  (with respect to an inner product) and its associated operator is 
			$$
			T=H_2QH_1
			$$
			defined in the natural domain $D(T)=\{\xi\in \H: QH_1\xi \in \D\}$.
		\end{cor}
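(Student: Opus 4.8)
The plan is to reduce the statement to Theorem~\ref{th_H1_H2} by absorbing the bounded form $\Up(\xi,\eta):=\pin{B\xi}{\eta}$ into a Radon--Nikodym-like representation. Since $H_1,H_2$ are positive, self-adjoint and boundedly invertible, every $\xi\in\D=D(H_1)$ equals $H_1^{-1}(H_1\xi)$ and every $\eta\in\D=D(H_2)$ equals $H_2^{-1}(H_2\eta)$; moving the self-adjoint operator $H_2^{-1}$ into the first slot gives
$$
(\O+\Up)(\xi,\eta)=\pin{QH_1\xi}{H_2\eta}+\pin{H_2^{-1}BH_1^{-1}(H_1\xi)}{H_2\eta}=\pin{(Q+H_2^{-1}BH_1^{-1})H_1\xi}{H_2\eta},\qquad\xi,\eta\in\D.
$$
Setting $Q':=Q+H_2^{-1}BH_1^{-1}\in\B(\H)$, which is an isomorphism of $\H$ by hypothesis, Theorem~\ref{th_H1_H2} applies to $\O+\Up$: this form is q-closed with respect to an inner product, it is solvable, $0\in\Po(\O+\Up)$, and its associated operator is $S:=H_2Q'H_1$ on the natural domain $D(S)=\{\xi\in\D:Q'H_1\xi\in\D\}$.

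Next I would transfer these properties back to $\O=(\O+\Up)+(-\Up)$. Being the difference of the q-closed form $\O+\Up$ and the form $\Up$, which is bounded on $\H$ and hence (by continuity of the embedding) on $\mathcal{E}_{\O+\Up}$, $\O$ is q-closed with respect to the same inner product; in particular $\Eo=\mathcal{E}_{\O+\Up}$ and $\Eo^\times=\mathcal{E}_{\O+\Up}^\times$. Now $\Up\in\Po(\O)$: its two defining conditions — that $(\O+\Up)(\xi,\cdot)=0$ forces $\xi=0$, and that every $\L\in\Eo^\times$ is of the form $(\O+\Up)(\xi,\cdot)$ for some $\xi\in\D$ — are exactly the assertion $0\in\Po(\O+\Up)$, already in hand. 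Hence $\Po(\O)\neq\emptyset$ and $\O$ is solvable with respect to an inner product.

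Finally I would identify the operator $T$ associated to $\O$. From $(\O+\Up)(\xi,\eta)=\pin{S\xi}{\eta}$ for $\xi\in D(S)$, $\eta\in\D$, subtracting $\Up$ gives $\O(\xi,\eta)=\pin{(S-B)\xi}{\eta}$ on the same set; conversely, the uniqueness clause of Theorem~\ref{th_rapp_risol} — applied to $\O$ and to $\O+\Up$, whose q-closedness norms coincide — forces $T=S-B$. A short computation with operator products, using $H_2(H_2^{-1}B\xi)=B\xi$, yields $S-B=H_2QH_1$; and since $H_2^{-1}B\xi\in D(H_2)=\D$ for every $\xi\in\H$, the condition $Q'H_1\xi\in\D$ is equivalent to $QH_1\xi\in\D$, so $D(S)=\{\xi\in\H:QH_1\xi\in\D\}$ (the expression $QH_1\xi$ being defined already forces $\xi\in\D$). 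This is precisely the claimed operator $T=H_2QH_1$ on its natural domain.

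I expect the only genuinely delicate points to be bookkeeping: justifying the rewriting in the first display (self-adjointness of $H_i^{-1}$ and the fact that $H_i$ maps $\D$ onto $\H$), and checking that the two ``natural domains'' coincide, i.e. that the extra term $H_2^{-1}B\xi$, which automatically lies in $\D$, does not alter the domain condition. Everything else is a direct application of Theorems~\ref{th_H1_H2} and \ref{th_rapp_risol}.
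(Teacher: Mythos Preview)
Your proposal is correct and follows essentially the same route as the paper: rewrite $\O+\Up$ as $\pin{(Q+H_2^{-1}BH_1^{-1})H_1\xi}{H_2\eta}$, apply Theorem~\ref{th_H1_H2} to get solvability of $\O+\Up$ with associated operator $H_2(Q+H_2^{-1}BH_1^{-1})H_1$, then subtract $B$ and check the domain condition is unaffected because $H_2^{-1}BH_1^{-1}$ maps $\H$ into $\D$. Your write-up is somewhat more explicit about why $\Up\in\Po(\O)$ follows from $0\in\Po(\O+\Up)$ and about the domain bookkeeping, but the argument is the same.
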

		\begin{proof}
			Setting $\Up(\xi,\eta)=\pin{B\xi}{\eta}$, for all $\xi,\eta\in \D$, the sesquilinear form 
			$$
			(\O+\Up)(\xi,\eta)=\pin{(Q+H_2^{-1}BH_1^{-1})H_1 \xi}{H_2 \eta}, \qquad \forall \xi,\eta \in \D,
			$$
			is solvable by Theorem \ref{th_H1_H2}. Hence, $\O$ is solvable, $\Up\in \Po(\O)$, and  the operator associated to $\O+\Up$ is
			$$
			T'=H_2(Q+H_2^{-1}BH_1^{-1})H_1.
			$$
			Taking into account that $H_2^{-1}BH_1^{-1}:\H\to \D$ we have 
			$$
			D(T')=\{\xi\in \H: (Q+H_2^{-1}BH_1^{-1})H_1\xi \in \D\}=\{\xi\in \H: QH_1\xi \in \D\}
			$$
			and
			$$
			T'=H_2(Q+H_2^{-1}BH_1^{-1})H_1=H_2QH_1+B.
			$$
			Therefore, $T=H_2QH_1$ is the operator associated to $\O$.
		\end{proof}
		
	\begin{osr}
		A special case of the previous corollary occurs if there exists $\lambda\in \C$, such that $Q-\lambda H_2^{-1}H_1^{-1}$ (or  $Q-\lambda H^{-2}$ if, also, $H_1=H_2=:H$) is an isomorphism of $\H$. In this case $-\lambda \iota\in \Po(\O)$.
	\end{osr}

		Now we prove this converse lemma.

		\begin{lem}
			\label{Rad-Nik}
			Let $\O$ be a q-closed sesquilinear form on $\D$ with respect to an inner product. Then one has the following.
			\begin{enumerate}
				\item There exists a positive self-adjoint operator $H$ with domain $D(H)=\D$ and $0\in \rho(H)$.
				\item Let $H$ be a positive self-adjoint operator with domain $D(H)=\D$ and $0\in \rho(H)$.  Then there exists a unique $Q\in \B(\H)$ such that 
				\begin{equation}
				\label{rapp_Rad_Nik}
				\O(\xi,\eta)=\pin{QH\xi}{H\eta}, \qquad \forall \xi,\eta \in \D.
				\end{equation}
				If, in addition, $\O$ is solvable, $\Up\in \Po(\O)$ and $B\in \B(\H)$ is the bounded operator such that $\Up(\xi,\eta)=\pin{B\xi}{\eta}$ for all $\xi,\eta\in \H$, then $Q+H^{-1}BH^{-1}$ is an isomorphism of $\H$.
			\end{enumerate}
			
		\end{lem}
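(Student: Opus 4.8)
The plan is to establish the two parts separately, using the Gelfand-triplet structure already available for the q-closed form $\O$.

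For part 1, I would start from the fact that $\O$ is q-closed with respect to an inner product, so $\Eo = \D[\noo]$ is a Hilbert space whose norm is induced by some inner product $\pint_\O$, and the embedding $\Eo \hookrightarrow \H$ is continuous. Since $\D$ is dense in $\H$ and the embedding is continuous with dense range, the inclusion map $j : \Eo \to \H$ is a bounded injective operator with dense range. Consider $j^* j : \Eo \to \Eo$; this is a positive, bounded, injective operator on the Hilbert space $\Eo$. Alternatively — and this is cleaner — identify the two inner products: since $\n{\xi} \le \alpha \n{\xi}_\O$ for all $\xi \in \D$, the form $\pint$ is a bounded positive sesquilinear form on $\Eo$, hence $\pin{\xi}{\eta} = \pin{R\xi}{\eta}_\O$ for a bounded positive self-adjoint (in $\Eo$) operator $R$; $R$ is injective because $\pint$ is an inner product. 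Then $H := R^{-1/2}$, computed in $\Eo$, is a positive self-adjoint operator on $\Eo$ with $0 \in \rho(H)$, and one checks that $\pin{H\xi}{H\eta}_? = \pin{\xi}{\eta}_\O$... — but I want $H$ acting on $\H$, not on $\Eo$. The correct route is: define $H$ on $\H$ by declaring $D(H) = \D$ and using the abstract fact that, because $(\D, \pint_\O)$ embeds continuously and densely in $(\H, \pint)$, there is a positive self-adjoint operator $H$ in $\H$ with $D(H) = \D$, $0 \in \rho(H)$, and $\n{H\xi} $ equivalent to $\n{\xi}_\O$ — this is the standard correspondence between closed positive forms and operators (Kato's first representation applied to the closed positive form $\pint_\O$ on $\D$, which is nothing but $\O = \pint$ in the triplet sense). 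So $H$ is the square root of the self-adjoint operator associated with the closed nonnegative form $\pint_\O$; positivity and $0\in\rho(H)$ follow from the norm equivalence $\n{\xi} \le \alpha\n{\xi}_\O$ giving $\n{H\xi}\ge \alpha^{-1}\n{\xi}$.

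For part 2, given any such $H$, the map $\xi \mapsto H\xi$ is an isomorphism of $\Eo = \D[\noo]$ onto $\H$ (norm equivalence of $\n{H\cdot}$ with $\noo$, closed graph theorem), so I can write every $\xi \in \D$ as $\xi = H^{-1}u$ with $u \in \H$. Then $\O(\xi,\eta) = \O(H^{-1}u, H^{-1}v)$ defines a bounded sesquilinear form on $\H\times\H$ (boundedness follows from q-closedness: $|\O(H^{-1}u,H^{-1}v)| \le \beta \n{H^{-1}u}_\O \n{H^{-1}v}_\O \le \beta' \n{u}\n{v}$), hence equals $\pin{Qu}{v}$ for a unique $Q \in \B(\H)$; substituting back $u = H\xi$, $v = H\eta$ gives \eqref{rapp_Rad_Nik}, and uniqueness of $Q$ follows from the density of $H\D = \H$... more precisely from $\{H\xi : \xi\in\D\} = \H$. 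For the last assertion, suppose $\O$ is solvable with $\Up \in \Po(\O)$ and $B$ the bounded operator of $\Up$. Then for all $\xi,\eta\in\D$,
$$
(\O+\Up)(\xi,\eta) = \pin{QH\xi}{H\eta} + \pin{B\xi}{\eta} = \pin{(Q + H^{-1}BH^{-1})H\xi}{H\eta},
$$
using $\pin{B\xi}{\eta} = \pin{BH^{-1}(H\xi)}{H^{-1}(H\eta)} = \pin{H^{-1}BH^{-1}(H\xi)}{H\eta}$ and self-adjointness of $H^{-1}$. Set $Q' := Q + H^{-1}BH^{-1} \in \B(\H)$. The defining properties of $\Po(\O)$ translate, via the isomorphism $H : \Eo \to \H$ and the Riesz representation on $\Eo$ identifying $\Eo^\times$ with $\H$ through $\Lambda \leftrightarrow$ (the $u\in\H$ with $\pin{\Lambda}{\eta} = \pin{u}{H\eta}$), into: (i) $Q'u = 0 \Rightarrow u = 0$, i.e. $Q'$ injective; (ii) for every $u\in\H$ there is $w\in\H$ with $\pin{Q'w}{H\eta} = \pin{u}{H\eta}$ for all $\eta$, i.e. (since $H\D=\H$) $Q'w = u$, so $Q'$ is surjective. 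Hence $Q'$ is a bounded bijection of $\H$, i.e. an isomorphism by the open mapping theorem.

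The main obstacle is the bookkeeping in part 1 — producing a self-adjoint operator $H$ \emph{on $\H$} (not on $\Eo$) with $D(H)=\D$ exactly. The clean way is to invoke the first representation theorem (equivalently, Theorem \ref{th_rapp_risol} or Theorem \ref{th_H1_H2} with the trivial closed positive form $\pint_\O$) for the closed nonnegative densely defined form $\pint_\O$ on $\D$: it is represented by a positive self-adjoint operator $A$ with $D(A) \subseteq \D$ dense, and setting $H = A^{1/2}$ gives $D(H) = \D$ with $\n{H\xi}^2 = \pin{A\xi}{\xi} \ge \dots$; the equality $D(H)=\D$ is precisely the second representation theorem for the \emph{closed positive} form $\pint_\O$, which does hold unconditionally (this is Kato VI.2.23), and $0\in\rho(H)$ because $\n{H\xi}^2 = \n{\xi}_\O^2 \ge \alpha^{-2}\n{\xi}^2$. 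Everything else is routine manipulation in the triplet.
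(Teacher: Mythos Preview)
Your proposal is correct and follows essentially the same route as the paper: for part~1 both you and the paper recognize the given inner product $\pint_\O$ on $\D$ as a closed non-negative form and invoke Kato's second representation theorem (VI.2.23) to produce $H=A^{1/2}$ with $D(H)=\D$ and $0\in\rho(H)$; for part~2 the paper cites Kato's Lemma~VI.3.1 where you spell out the equivalent change-of-variables $u=H\xi$, $v=H\eta$ to obtain $Q$, and the isomorphism claim for $Q+H^{-1}BH^{-1}$ is in both cases the direct translation of the two defining properties of $\Po(\O)$ through the unitary $H:\Eo\to\H$.
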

		\begin{proof}
			\begin{enumerate}
			\item Let $\pint_*$ be an inner product with respect to which $\O$ is q-closed. Then,  $\pint_*$ is a non-negative closed sesquilinear form such that $\alpha\pin{\xi}{\xi}\leq \pin{\xi}{\xi}_*$ for some $\alpha>0$ and for all $\xi\in \D$. By Kato's second representation theorem \cite[Theorem VI.2.23]{Kato}, there exists a positive self-adjoint operator $H$ with domain $\D$ and resolvent set containing $0$. 		
			\item Let $H$ be a positive self-adjoint operator with domain $D(H)=\D$ and $0\in \rho(H)$. Then, by the closed graph theorem, the norm %$\xi \mapsto \n{H' \xi}$ with $\xi \in \D$ is equivalent to the one induced by $\pint_\O$. Therefore, by Theorem \ref{th_q_cl_sol_norm_eq} \\
			induced by the inner product
			$$
			\pin{\xi}{\eta}_\O=\pin{H\xi}{H\eta}, \qquad \forall \xi,\eta \in \D
			$$ 
			is equivalent to the one induced by $\pint_*$.  Therefore, by Theorem \ref{th_q_cl_sol_norm_eq} $\O$ is q-closed with respect to  $\pint_\O$.\\
			By the boundedness of $\O$ in $\D[\pint_\O]$, we have, for some $\beta>0$, 
			$$
			|\O(\xi,\xi)|\leq \beta\n{H\xi}^2, \qquad \forall \xi \in \D,
			$$
			and using \cite[Lemma VI.3.1]{Kato}, there exists $Q\in \B(\H)$ such that 
			$$
			\O(\xi,\eta)=\pin{QH\xi}{H\eta}, \qquad \forall \xi,\eta \in \D.
			$$
			The uniqueness of $Q$ follows from the fact that $0\in \rho(H)$.\\
%			Let $H'$ be as in the statement, then by Lemma \ref{P1,2^alfa} the norm $\xi \mapsto \n{H' \xi}$, with $\xi \in \D$, is equivalent to the one induced by $\pint_\O$. By the same argument we have used above, there exists a unique $Q'\in \B(\H)$ such that 
%			$$
%			\O(\xi,\eta)=\pin{Q'H'\xi}{H'\eta}, \qquad \forall \xi,\eta \in \D.
%			$$	
			Assume now that $\O$ is also solvable and $\Up\in\Po(\O)$. Let $B$ be the operator associated to $\Up$. We have 
			$$
			(\O+\Up)(\xi,\eta)=\pin{(Q+H^{-1}BH^{-1})H\xi}{H\eta}, \qquad \forall \xi,\eta \in \D,
			$$
			By the definition of solvability, $Q+H^{-1}BH^{-1}$ is an isomorphism of $\H$. In particular, to prove that $Q+H^{-1}BH^{-1}$ is surjective, let $\chi\in \H$. Then the functional $\pin{\Lambda}{\eta}=\pin{\chi}{H\eta}$ is bounded on $\Eo$, because $|\pin{\Lambda}{\eta}|\leq \n{\chi}\n{\eta}_\O$, by Cauchy-Schwarz inequality. Then, there exists $\xi\in \D$ such that 
			$ \pin{\Lambda}{\eta} =(\O+\Up)(\xi,\eta)$, for all $\eta \in \D$, which imply $(Q+H^{-1}BH^{-1})H\xi=\chi$.
		\end{enumerate}
		\end{proof}
		
		Hence, one can formulate the following characterization.
		
		\begin{teor}
			\label{car_Rad_Nik}
			A sesquilinear form $\O$ on $\D$ is q-closed with respect to an inner product if and only if there exist a positive self-adjoint operator $H$, with domain $D(H)=\D$ and $0\in \rho(H)$, and $Q\in \B(\H)$ such that 
			\begin{equation}
			\label{Rad_Nik_like}
			\O(\xi,\eta)=\pin{QH\xi}{H\eta}, \qquad \forall \xi,\eta \in \D.
			\end{equation}
%			Moreover, a bounded form $\Up$ belongs to $\Po(\O)$ if, and only if, the operator $B$ associated to $\Up$ is such that $Q+H^{-1}BH^{-1}$ is an isomorphism of $\H$.
			Suppose that \emph{(\ref{Rad_Nik_like})} holds with $Q$ and $H$ as above. Then
				%and that $\O$ is also solvable. Then its associated operator is $T=HQH$
				%defined in the natural domain $D(T)=\{\xi\in \D: QH\xi \in \D\}$.
					\begin{enumerate}
						\item a bounded form $\Up$ with associated operator $B$ belongs to $\Po(\O)$ if and only if $Q+H^{-1}BH^{-1}$ is a bijection of $\H$;
						\item if $\O$ is also solvable then its associated operator is $T=HQH$
						defined on the natural domain $D(T)=\{\xi\in \D: QH\xi \in \D\}$.
					\end{enumerate}
		\end{teor}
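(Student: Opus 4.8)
\textbf{Proof proposal for Theorem \ref{car_Rad_Nik}.}

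The plan is to assemble the characterization directly from the lemmas just established. For the ``only if'' direction of the biconditional, suppose $\O$ is q-closed with respect to an inner product. By part 1 of Lemma \ref{Rad-Nik} there is a positive self-adjoint $H$ with $D(H)=\D$ and $0\in\rho(H)$; feeding this $H$ into part 2 of Lemma \ref{Rad-Nik} produces the required $Q\in\B(\H)$ and the representation \eqref{Rad_Nik_like}. For the ``if'' direction, assume \eqref{Rad_Nik_like} holds with such $H$ and $Q$; this is precisely the hypothesis of Theorem \ref{th_H1_H2} in the case $H_1=H_2=H$ (without the isomorphism assumption on $Q$), so $\O$ is q-closed with respect to the inner product $\pin{\xi}{\eta}_\O=\pin{H\xi}{H\eta}$. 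Hence the biconditional holds.

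For the two additional clauses, suppose \eqref{Rad_Nik_like} holds. Clause 1: if $\Up\in\Po(\O)$ with associated operator $B$, then the implication ``$\Up\in\Po(\O)\Rightarrow Q+H^{-1}BH^{-1}$ is an isomorphism'' is exactly the last assertion of part 2 of Lemma \ref{Rad-Nik} (which, rereading its proof, only uses q-closedness of $\O$ together with $\Up\in\Po(\O)$, not any a priori solvability). Conversely, if $Q+H^{-1}BH^{-1}$ is a bijection of $\H$, then the argument in the proof of the Corollary following Theorem \ref{th_H1_H2} shows $\O+\Up$ is solvable with $0\in\Po(\O+\Up)$, and since $\Up$ is a bounded perturbation this gives $\Up\in\Po(\O)$; alternatively one observes that $\O+\Up$ has the Radon–Nikodym form $\pin{(Q+H^{-1}BH^{-1})H\xi}{H\eta}$ with an isomorphism in the middle, invoke Theorem \ref{th_H1_H2} to get solvability of $\O+\Up$ with $0$ admissible, hence $\O$ is solvable and $\Up\in\Po(\O)$. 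Clause 2: if $\O$ is solvable, then choosing any $\Up\in\Po(\O)$ with associated $B$, the identity $(\O+\Up)(\xi,\eta)=\pin{(Q+H^{-1}BH^{-1})H\xi}{H\eta}$ together with Theorem \ref{th_H1_H2} (applied to $\O+\Up$, whose middle operator $Q+H^{-1}BH^{-1}$ is an isomorphism by clause 1) identifies the operator associated to $\O+\Up$ as $H(Q+H^{-1}BH^{-1})H=HQH+B$ on the domain $\{\xi\in\D:QH\xi\in\D\}$ (using $H^{-1}BH^{-1}\colon\H\to\D$ to see this equals the natural domain); subtracting $B$, which does not change the domain, yields $T=HQH$ with $D(T)=\{\xi\in\D:QH\xi\in\D\}$, as claimed.

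The only genuinely delicate point is making sure that the ``$\Rightarrow$'' half of clause 1 does not secretly presuppose solvability of $\O$: one must check that the surjectivity-of-$Q+H^{-1}BH^{-1}$ argument given in Lemma \ref{Rad-Nik} goes through verbatim starting only from $\Up\in\Po(\O)$ and the Riesz representation on $\Eo$, which it does. The remaining steps are bookkeeping: verifying the natural-domain identity via the mapping property $H^{-1}BH^{-1}\colon\H\to\D$, and noting that adding or subtracting a bounded operator $B$ leaves domains intact. No new estimates are needed beyond those already proved.
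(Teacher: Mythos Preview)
Your proposal is correct and follows essentially the same route as the paper, which presents the theorem as an immediate synthesis of Theorem~\ref{th_H1_H2}, its Corollary, and Lemma~\ref{Rad-Nik} (the paper gives no separate proof, only the sentence ``Hence, one can formulate the following characterization''). One minor remark: your concern in the final paragraph that the ``$\Rightarrow$'' half of clause~1 might secretly presuppose solvability is moot, since the hypothesis $\Up\in\Po(\O)$ already forces $\Po(\O)\neq\emptyset$ and hence $\O$ is solvable by definition; so Lemma~\ref{Rad-Nik} applies verbatim without any rereading of its proof.
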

		
		\begin{defin}
			Let $\O$ be a q-closed sesquilinear form on $\D$ with respect to an inner product. An expression like (\ref{Rad_Nik_like}) (with $Q,H$ as in the statement) is called a {\it Radon-Nikodym-like representation} of $\O$.
		\end{defin}
		
		\begin{osr}
			\label{rem_T-reg}
			Assume that $\O$ is a q-closed sesquilinear form with respect to an inner product $\pint_\O$. Then $\O$ is $\pint$-regular in the sense of \cite{Tp_DB} (in particular, $\pint_\O$ is $\pint$-absolutely continuous by \cite[Corollary 2.3]{RC_CT}) and therefore it admits a Radon-Nikodym representation according to \cite[Theorem 3.6]{Tp_DB} as follows
			\begin{equation}
			\label{pas_Rad_Nik_orig}
				\O(\xi,\eta)=\pin{KY\xi}{K\eta}, \qquad\forall \xi,\eta \in \D,
			\end{equation}
			where $K$ is a positive, self-adjoint operator which domain $D(K)$ contains $\D$ and $Y$ is an operator from $\D$ into $D(K)$. Moreover, $0\in \rho(K)$, as one can see in the proof of \cite[Theorem 3.6]{Tp_DB}. According to \cite[Remark 3.7]{Tp_DB}, $\O$ admits a representation 
			\begin{equation}
				\label{pas_Rad_Nik_orig2}
				\O(\xi,\eta)=\pin{SK\xi}{K\eta}, \qquad\forall \xi,\eta \in \D,
			\end{equation}
			where $S\in \B(\H)$. Hence, if $D(K)=\D$, then (\ref{pas_Rad_Nik_orig2}) is exactly a representation (\ref{Rad_Nik_like}).
			This motivates the terminology "Radon-Nikodym-like" representation. Actually, if $H$ is as in Lemma \ref{Rad-Nik}, then $\pint_\O:=\pin{H\cdot}{H\cdot}$ defines an inner product with respect to which $\O$ is q-closed, and following Section 6 of \cite{RC_CT}, there exists an operator $ P\in \B(\Eo)$, where $\Eo:=\D[\pint_\O]$, such that
			$$
			\O(\xi,\eta)=\pin{HP\xi}{H\eta},\qquad \forall\xi,\eta \in \D,
			$$	
			which is an expression like (\ref{pas_Rad_Nik_orig}).
			
			Radon-Nikodym type theorems for sesquilinear forms were previously given in the non-negative case by Sebestyén and Titkos in \cite{Seb_Tit} and by Tarcsay \cite{Tarc}.
		\end{osr}
		
		\begin{osr}
			Theorems \ref{th_q_cl_sol_norm_eq} and \ref{car_Rad_Nik}  imply that a necessary and sufficient condition for a solvable sesquilinear form $\O$ to have a Radon-Nikodym representation is that $\O$ is solvable with respect to a norm equivalent to one induced by an inner product. 
		\end{osr}
		
		We conclude this section with a simple relation between a sesquilinear form and its adjoint which gives also another proof of Theorem 4.11 of \cite{RC_CT} (in the case of q-closed/solvable forms with respect to an inner product).
		
		\begin{pro}
			\label{pro_RN_agg}
			Let $\O$ be a q-closed sesquilinear form on $\D$ with a Radon-Nikodym-like representation
			$$
			\O(\xi,\eta)=\pin{QH\xi}{H\eta}, \qquad \forall \xi,\eta \in \D.
			$$
			Then, $\O^*$ has a Radon-Nikodym-like representation
			$$
			\O^*(\xi,\eta)=\pin{Q^*H\xi}{H\eta}, \qquad \forall \xi,\eta \in \D.
			$$
			If in addition $\O$ and $\O^*$ are solvable, the operators $HQH$ and $HQ^*H$ are those associated to $\O$ and $\O^*$, respectively.
		\end{pro}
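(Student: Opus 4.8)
The plan is to obtain the representation of $\O^*$ by a one-line computation and then to identify the associated operators by quoting Theorem \ref{car_Rad_Nik}.

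First I would note that $\O^*$ is q-closed with respect to the \emph{same} inner product $\pint_\O:=\pin{H\cdot}{H\cdot}$ as $\O$. Indeed, conditions 1 and 2 of Definition \ref{def_q_chiusa} concern only the norm on $\D$, not the form, and condition 3 transfers because $|\O^*(\xi,\eta)|=|\O(\eta,\xi)|\le \beta\n{\xi}_\O\n{\eta}_\O$ with the very same constant $\beta$ that bounds $\O$. Next, for all $\xi,\eta\in\D$,
$$
\O^*(\xi,\eta)=\ol{\O(\eta,\xi)}=\ol{\pin{QH\eta}{H\xi}}=\pin{H\xi}{QH\eta}=\pin{Q^*H\xi}{H\eta},
$$
where the last step is merely the definition of the bounded adjoint $Q^*\in\B(\H)$ of $Q$. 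Since $H$ is still a positive self-adjoint operator with $D(H)=\D$ and $0\in\rho(H)$, and $Q^*\in\B(\H)$, the right-hand side is a Radon-Nikodym-like representation of $\O^*$ in the sense of the definition following Theorem \ref{car_Rad_Nik}.

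For the final assertion, assume in addition that $\O$ and $\O^*$ are solvable. Applying point 2 of Theorem \ref{car_Rad_Nik} to the representation $\O(\xi,\eta)=\pin{QH\xi}{H\eta}$ yields that the operator associated to $\O$ is $HQH$ on its natural domain $\{\xi\in\D:QH\xi\in\D\}$; applying the same point to $\O^*(\xi,\eta)=\pin{Q^*H\xi}{H\eta}$ yields that the operator associated to $\O^*$ is $HQ^*H$ on $\{\xi\in\D:Q^*H\xi\in\D\}$.

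There is essentially no obstacle in this argument; the only two points worth a sentence each are the transfer of q-closedness from $\O$ to $\O^*$ (same $H$, same bound $\beta$) and the remark that the computed expression genuinely satisfies the hypotheses in the definition of a Radon-Nikodym-like representation, so that Theorem \ref{car_Rad_Nik} applies to $\O^*$ word for word.
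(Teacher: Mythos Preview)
Your argument is correct and is precisely the routine computation the paper has in mind; in fact the paper states the proposition without proof, calling it ``a simple relation,'' so your one-line adjoint calculation together with the appeal to Theorem~\ref{car_Rad_Nik} is exactly the intended justification.
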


		\section{The second representation theorem}
		\label{sec:2nd}
		
		The second representation theorem holds for hyper-solvable sesquilinear forms, in the sense of the next definition.
				
		\begin{defin}
			A solvable sesquilinear form on $\D$ with associated operator $T$ is said {\it hyper-solvable} if $\D=D(|T|^\mez)$.
		\end{defin}
		
		\begin{osr}
			Fleige {\it et al.} in \cite{Fleige,FHdeS,FHdeSW',FHdeSW} use the name "regular" instead of "hyper-solvable". We use a different terminology because we do not want to confuse hyper-solvable forms with $\T$-regular forms (see Remark \ref{rem_T-reg}).
		\end{osr}
	
		The next result is an application of the closed graph theorem.
		
		\begin{pro}
			\label{pro_equiv_norm}
			If $\O$ is a hyper-solvable form on $\D$ with respect to a norm $\noo$, then $\noo$ is equivalent to the graph norm of $|T|^\mez$, where $T$ is the operator associated to $\O$.
		\end{pro}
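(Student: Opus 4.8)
The plan is to regard $\D$ as carrying two complete norms at once — the norm $\noo$ furnished by q-closedness and the graph norm of $|T|^\mez$ — and to deduce their equivalence from the closed graph theorem, the point being that the hypothesis $\D=D(|T|^\mez)$ is precisely what guarantees these are norms on \emph{the same} vector space.

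First I would recall that, by Theorem \ref{th_rapp_risol}, the associated operator $T$ is densely defined and closed, so that $|T|=(T^*T)^\mez$ is a positive self-adjoint operator and hence so is $|T|^\mez$. Being self-adjoint, $|T|^\mez$ is closed, and therefore $D(|T|^\mez)$ endowed with the graph norm $\n{\xi}_{|T|^\mez}:=\big(\n{\xi}^2+\n{|T|^\mez\xi}^2\big)^{1/2}$ is a Hilbert space, continuously embedded in $\H$ since $\n{\xi}\leq\n{\xi}_{|T|^\mez}$. By hyper-solvability this vector space is exactly $\D$.

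On the other hand, $\O$ being solvable — hence q-closed — with respect to $\noo$, the space $\Eo=\D[\noo]$ is a reflexive Banach space, likewise continuously embedded in $\H$ by condition 1 of Definition \ref{def_q_chiusa}. Thus $\noo$ and $\n{\cdot}_{|T|^\mez}$ are two complete norms on one and the same vector space $\D$, each dominating a multiple of $\nor$. I would then apply the closed graph theorem to the identity map $\D[\noo]\to\D[\n{\cdot}_{|T|^\mez}]$: if $\xi_n\to\xi$ in $\noo$ and $\xi_n\to\zeta$ in $\n{\cdot}_{|T|^\mez}$, then both sequences converge to their respective limits in $\H$ by the two continuous embeddings, whence $\xi=\zeta$; so the graph is closed and the map is bounded, i.e. $\n{\xi}_{|T|^\mez}\leq c\,\n{\xi}_\O$ for all $\xi\in\D$. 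The same argument for the identity in the opposite direction yields $\n{\xi}_\O\leq c'\,\n{\xi}_{|T|^\mez}$, so the two norms are equivalent. (Alternatively, one could verify directly that $\O$ is q-closed with respect to $\n{\cdot}_{|T|^\mez}$ and then invoke Theorem \ref{th_q_cl_sol_norm_eq}; but the closed graph argument above is quicker.)

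The argument is essentially soft and I do not expect a serious obstacle. The only points that need care are that the graph norm of $|T|^\mez$ is genuinely complete — which is just the closedness of the self-adjoint operator $|T|^\mez$ — and the observation that the hyper-solvability hypothesis $\D=D(|T|^\mez)$ is used precisely to place both norms on the same underlying space, so that the closed graph theorem is applicable.
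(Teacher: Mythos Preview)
Your argument is correct and is precisely the approach the paper has in mind: the paper states this proposition without proof, noting only that it ``is an application of the closed graph theorem,'' and your write-up fills in exactly those details.
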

%	\begin{proof}
%		 The domain $\D$ of $\O$ is complete under both the graph norm of $|T|^\mez$ and $\noo$. So this norms are equivalent by  \cite[Corollary 2.6]{RC_CT}. 
%	\end{proof}

		\begin{cor}
			\label{cor_hyper_inner}
			Every hyper-solvable sesquilinear form is solvable with respect to an inner product.
		\end{cor}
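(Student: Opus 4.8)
The plan is to reduce everything to the norm–equivalence principle of Theorem \ref{th_q_cl_sol_norm_eq}. Let $\O$ be hyper-solvable on $\D$ with respect to some norm $\noo$, and let $T$ be its associated operator, so that by definition $\D=D(|T|^\mez)$. The key observation is that the graph norm of $|T|^\mez$, namely
$$
\xi\mapsto \left(\n{\xi}^2+\n{|T|^\mez\xi}^2\right)^{\mez},\qquad \xi\in D(|T|^\mez)=\D,
$$
is induced by the inner product $\pin{\xi}{\eta}+\pin{|T|^\mez\xi}{|T|^\mez\eta}$ on $\D$; in particular it is a Hilbert norm, and since $\D=D(|T|^\mez)$ it is defined on all of $\D$.

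First I would invoke Proposition \ref{pro_equiv_norm}, which tells us that $\noo$ is equivalent to this graph norm of $|T|^\mez$. Then, since $\O$ is by hypothesis solvable with respect to $\noo$, Theorem \ref{th_q_cl_sol_norm_eq} applied with $\nor_1=\noo$ and $\nor_2$ the graph norm of $|T|^\mez$ yields that $\O$ is solvable with respect to that graph norm as well. As the latter is a Hilbert norm, this is precisely the assertion that $\O$ is solvable with respect to an inner product.

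There is essentially no obstacle here: the only things used are the definition of hyper-solvability, Proposition \ref{pro_equiv_norm} (the closed-graph argument identifying $\noo$ with the $|T|^\mez$-graph norm up to equivalence), and the transfer of solvability along equivalent norms from Theorem \ref{th_q_cl_sol_norm_eq}. The one point worth stating explicitly in the write-up is that the $|T|^\mez$-graph norm really is a Hilbert norm defined on the whole of $\D$ — which is exactly what the hypothesis $\D=D(|T|^\mez)$ guarantees — so that ``solvable with respect to this norm'' legitimately means ``solvable with respect to an inner product''.
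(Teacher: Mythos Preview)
Your proof is correct and is exactly the argument implicit in the paper: the corollary is stated immediately after Proposition~\ref{pro_equiv_norm} with no separate proof, the intended reasoning being precisely that the graph norm of $|T|^\mez$ is a Hilbert norm on $\D=D(|T|^\mez)$, equivalent to $\noo$ by Proposition~\ref{pro_equiv_norm}, whence solvability transfers via Theorem~\ref{th_q_cl_sol_norm_eq}.
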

		
		\begin{esm}[{\cite[Example 6.1]{Tp_DB}}, {\cite[Example 5.5]{RC_CT}}]
			\label{esm_simpl_1}
			Let $\{\alpha_n\}$ be a sequence of complex numbers and let $\Oa$ be the sesquilinear form on 
			$$
			\D=\left \{\{\xi_n\}\in \ll_2:\sum_{n=1}^\infty |\alpha_n| |\xi_n|^2< \infty\right \}
			$$
			given by
			$$
			\Oa(\{\xi_n\},\{\eta_n\})=\sum_{n=1}^{\infty} \alpha_n\xi_n\ol{\eta_n}, \qquad  \{\xi_n\},\{\eta_n\}\in \D.
			$$
			The sesquilinear form $\Oa$ is solvable. In particular, consider 
			the sequence $\ull{\beta}=\{\beta_n\}$ such that $\beta_n=-\alpha_n+2$ if $|\alpha_n|\leq 1$, and $\beta_n=0$ if $|\alpha_n|> 1$. Then, by $0\notin \ol{\{\alpha_n+\beta_n\}}$, it is easily to see that the bounded sesquilinear form
			$$
			\O_\beta(\{\xi_n\},\{\eta_n\})=\sum_{n=1}^{\infty} \beta_n\xi_n\ol{\eta_n}, \qquad  \{\xi_n\},\{\eta_n\}\in\H
			$$
			belongs to $\Po(\Oa)$. The operator $\Ma$ associated to $\Oa$ is defined on the domain 
			$$
			D(\Ma)=\left \{\{\xi_n\}\in \ll_2:\sum_{n=1}^\infty |\alpha_n \xi_n|^2< \infty\right \}
			$$
			and acts as follows
			$\Ma \{\xi_n\}=\{\alpha_n\xi_n\}$ for all $\{\xi_n\} \in D(\Ma).$
			Clearly, $D(|\Ma|^\mez)=\D$; hence $\Oa$ is hyper-solvable.
		\end{esm}

		\begin{esm}
			\label{esm_simpl_2}
			Let $\mathcal{L}$ be the Lebesgue measure on $\C$, $r:\C\to \C$ be a measurable function and $\O$ the sesquilinear form with domain
			$$
			\D:=\left \{f\in L^2(\C): \int_{\C}|r(z)||f(z)|^2d\mathcal{L}< \infty \right\}
			$$
			and given by
			$	\displaystyle
			\O(f,g)=\int_\C r(z)f(z)\ol{g(z)}d\mathcal{L},$ for all $f,g\in \D$.\\
			It is easy to see that $\O$ is q-closed with respect to the norm
			$$ \n{f}_\O=\left (\int_{\C}(1+|r(z)|)|f(z)|^2d\mathcal{L}\right )^{\frac{1}{2}}, \qquad  f\in \D.$$
			Let $Z:=\{z\in \C: |r(z)|\leq 1\}$, $B$ be the bounded operator given by
			\begin{equation}
			\label{B_esm_simpl_2}
				(Bf)(z)=(1-r(z))\chi_Z(z)f(z), \qquad  f \in L^2(\C),
			\end{equation}
			where $\chi_Z$ is the characteristic function on $Z$, 
			and let 
			\begin{equation}
			\label{Up_esm_simpl_2}
				\Up(f,g)=\int_\C (1-r(z))\chi_Z(z)f(z)\ol{g(z)}d\mathcal{L}, \qquad  f,g\in L^2(\C).
			\end{equation}
			Following Example 7.3 of \cite{RC_CT}, one can prove that $\Up\in \Po(\O)$, i.e. $\O$ is solvable with respect to the norm $ \n{\cdot}_\O$.\\
			Now, we show that the operator $T$ associated to $\O$ is the multiplication operator $M$ by $r$, $(M f)(z)=r(z)f(z)$, with domain
			$$
			D(M)=\left \{f\in L^2(\C): \int_{\C} |r(z)f(z)|^2d\mathcal{L} < \infty \right \}\subset \D.
			$$ 
			Indeed, we have 
			$$
			\O(f,g)=\int_{\C} (M f)(z)\ol{g(z)}d\mathcal{L}, \qquad \forall f\in D(M),g\in \D,
			$$	
			and $0\in \rho(M +B)$; therefore, by Theorem \ref{th_rapp_risol}, $T=M$. Since $D(|M|^\mez)=\D$, $\O$ is hyper-solvable.	
		\end{esm}

	\begin{osr}
		Examples of (symmetric) solvable sesquilinear forms which are not hyper-solvable can be found in Example 5.4 of \cite{FHdeS}, in Example 2.11, in Remark 3.7 and in Proposition 4.2 of \cite{GKMV}.
	\end{osr}

	Hyper-solvable sesquilinear forms have a special Radon-Nikodym-like representation.
				
		\begin{teor}
			\label{2_repr_th_1}
			Let $\O$ be a solvable sesquilinear form on $\D$ and $T$ be the operator associated to $\O$. Suppose, moreover, that $\Up\in \Po(\O)$ and $B$ is the operator associated to $\Up$. \\
			If $\O$ is hyper-solvable, then there exists a unique operator $V\in \B(\H)$ such that 
			$$
			\O(\xi,\eta)=\pin{V|T+B|^\mez\xi}{|T+B|^\mez\eta}, \qquad \forall \xi,\eta \in \D.
			$$
			In particular, if  $0\in \rho(T)$, then there exists a unique bijection $W\in \B(\H)$ such that
			\begin{equation}
			\label{eq_rapp_WTT}
			\O(\xi,\eta)=\pin{W|T|^\mez\xi}{|T|^\mez\eta}, \qquad \forall \xi,\eta \in \D.
			\end{equation}
		\end{teor}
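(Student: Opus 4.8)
The plan is to produce the claimed representation by choosing the right positive self-adjoint operator $H$ in a Radon-Nikodym-like representation of $\O$ — namely $H=|T+B|^\mez$ — and then to read off $V$ from Lemma \ref{Rad-Nik}.

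First I would extract what the hypotheses give. Since $\O$ is solvable with associated operator $T$ and $\Up\in\Po(\O)$ has associated operator $B$, Theorem \ref{th_Po_ris} yields $0\in\rho(T+B)$. Hyper-solvability means $\D=D(|T|^\mez)$, and Corollary \ref{cor_sqrt} then gives $D(|T+B|^\mez)=D(|T|^\mez)=\D$, together with the fact that $\xi\mapsto\n{|T+B|^\mez\xi}$ is an equivalent norm on $\D$. In particular $H:=|T+B|^\mez$ is a positive self-adjoint operator with domain $D(H)=\D$ and with $0\in\rho(H)$ (this last point follows either from the norm equivalence in Corollary \ref{cor_sqrt} or directly from $0\in\rho(T+B)$ via the polar decomposition of $T+B$ and the spectral calculus).

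Next, by Corollary \ref{cor_hyper_inner} the form $\O$ is solvable, hence q-closed, with respect to an inner product, so Lemma \ref{Rad-Nik}(2) applies with the operator $H=|T+B|^\mez$ selected above: there is a unique $V\in\B(\H)$ (the operator called $Q$ in that lemma) with
$$\O(\xi,\eta)=\pin{VH\xi}{H\eta}=\pin{V|T+B|^\mez\xi}{|T+B|^\mez\eta},\qquad\forall\xi,\eta\in\D.$$
Uniqueness comes for free from $0\in\rho(H)$: then $H$ maps $\D$ bijectively onto $\H$, so the values $\pin{VH\xi}{H\eta}$ determine $V$ on all of $\H\times\H$. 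This settles the first assertion. For the ``in particular'' statement, suppose $0\in\rho(T)$. By Theorem \ref{th_Po_ris} the zero form belongs to $\Po(\O)$, so I would apply the first assertion with $\Up$ the zero form and $B=0$, obtaining a unique $W\in\B(\H)$ with $\O(\xi,\eta)=\pin{W|T|^\mez\xi}{|T|^\mez\eta}$ for all $\xi,\eta\in\D$. That $W$ is a bijection then follows from the last clause of Lemma \ref{Rad-Nik}(2) applied with $H=|T|^\mez$ and $B=0$: the operator $W+H^{-1}BH^{-1}=W$ is an isomorphism of $\H$.

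The computations involved are routine; the only genuine idea is recognising that, under hyper-solvability, the correct $H$ for a Radon-Nikodym-like representation is the square root of the modulus of the \emph{perturbed} operator $T+B$, which is exactly what makes $0\in\rho(H)$. The one technical point that needs care is checking that $|T+B|^\mez$ satisfies all the hypotheses of Lemma \ref{Rad-Nik} — positive self-adjointness, domain equal to $\D$, and $0$ in the resolvent set — all of which follow from Theorem \ref{th_Po_ris}, Corollary \ref{cor_sqrt}, and the hyper-solvability assumption, as above.
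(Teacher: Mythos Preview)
Your argument is correct and follows essentially the same route as the paper's proof: verify $D(|T+B|^\mez)=\D$ and $0\in\rho(|T+B|^\mez)$, then apply Lemma~\ref{Rad-Nik} with $H=|T+B|^\mez$, and for the second part take $B=0$ via Theorem~\ref{th_Po_ris}. You are simply more explicit than the paper about invoking Corollary~\ref{cor_hyper_inner} and about why $W$ is a bijection (the last clause of Lemma~\ref{Rad-Nik}(2) with $B=0$), but the substance is the same.
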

		\begin{proof}
			We have $D(|T+B|^\mez)=D(|T|^\mez)=\D,$	by Lemma \ref{P1,2^alfa}. 
			Theorem \ref{th_rapp_risol} implies that $0\in \rho(T+B)$ and therefore $0\in \rho(|T+B|^\mez)$. So, the statement is proved by applying Lemma \ref{Rad-Nik}. \\
			If $0\in \rho(T)$ then one can choose $B=0$ in Theorem \ref{th_Po_ris}.
		\end{proof}

		Now, basing on a Radon-Nikodym-like representation,  we establish necessary and sufficient conditions for a solvable sesquilinear form to be hyper-solvable (Theorem \ref{cns_D=}). These conditions are inspired by those presented in Theorem 3.2 of \cite{GKMV}. More precisely, the following two lemmas hold.

		\begin{lem}
			\label{cns_D<}
			Let $\O$ be a solvable sesquilinear form on $\D$ with respect to an inner product and with associated operator $T$. Let 
			$$
			\O(\xi,\eta)=\pin{QH\xi}{H\eta}, \qquad \forall \xi,\eta\in \D,
			$$			
			be a Radon-Nikodym-like representation of $\O$. Suppose, moreover, that $\Up\in \Po(\O)$, $B$ is the operator associated to $\Up$, and that $T+B=U_B|T+B|$ is the polar decomposition of $T+B$. The following statements are equivalent.
			\begin{enumerate}
				\item $D(|T|^\mez)\supseteq\D$;	
				\item the operator $X=H^{-1}|T+B|H^{-1}$ is bounded on $D(X):=HD(T)$;
				\item the operator $K_1=HU_BH^{-1}$ is bounded on $\H$;
				\item $U_B\D\subseteq\D$.				
			\end{enumerate}
		\end{lem}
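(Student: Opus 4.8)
The plan is to prove the cycle of implications $1 \Leftrightarrow 2$, then $2 \Rightarrow 3$, $3 \Rightarrow 4$, and $4 \Rightarrow 1$, exploiting the Radon-Nikodym-like representation together with the polar decomposition $T+B = U_B|T+B|$ and the fact that $0 \in \rho(T+B)$ (hence $0 \in \rho(|T+B|)$ and $U_B$ is unitary), which holds by Theorem \ref{th_rapp_risol} since $\Up \in \Po(\O)$. Recall also that, by Theorem \ref{car_Rad_Nik}, $D(T) = \{\xi \in \D : QH\xi \in \D\}$ and $T = HQH$, so that on $D(T)$ one has $T+B = HQH + B = H(Q + H^{-1}BH^{-1})H =: H\widehat{Q}H$ with $\widehat{Q} := Q + H^{-1}BH^{-1}$ an isomorphism of $\H$ (Lemma \ref{Rad-Nik}); this identity will be the main bridge between the form side and the operator side.

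For $1 \Leftrightarrow 2$: by Corollary \ref{cor_sqrt} we have $D(|T|^\mez) = D(|T+B|^\mez)$ with equivalent graph norms, so condition $1$ says $D(|T+B|^\mez) \supseteq \D = D(H)$; by Lemma \ref{P1,2^alfa} (applied with the exponent $\mez$) this is equivalent to $D(|T+B|) \supseteq D(H^2)$, i.e. $D(H^2) = HD(H) \subseteq D(|T+B|)$, which after composing with $H^{-1}$ on both sides is exactly the statement that $X = H^{-1}|T+B|H^{-1}$ is defined on all of $\H$ — and then bounded on $HD(T)$ (a core, being $\D$ pushed through $H$) by the closed graph theorem, since $|T+B|$ is closed and $H^{-1} \in \B(\H)$. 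The delicate point here, and the one I expect to be the main obstacle, is to run the Heinz-inequality argument (Lemma \ref{P1,2^alfa}) in the correct direction: the lemma as stated takes equality of the domains of the \emph{first powers} as hypothesis, whereas here I only know the inclusion $\D \subseteq D(|T+B|^\mez)$ for the \emph{half} powers and must deduce $D(H^2) \subseteq D(|T+B|)$. This needs the monotonicity/interpolation content of the Heinz inequality for inclusions (not just equalities), so one either invokes a one-sided version of Lemma \ref{cns_D<}'s underlying tool or argues via the boundedness of $|T+B|^\mez H^{-1}$ implying boundedness of $|T+B|H^{-2}$ on a core using moment/interpolation inequalities for the positive operators $|T+B|$ and $H$.

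For $2 \Rightarrow 3$: write $T+B = U_B|T+B|$, so on $HD(T)$ we have $H U_B H^{-1} = H(T+B)|T+B|^{-1}H^{-1} = H \widehat{Q} H |T+B|^{-1} H^{-1}$; using $2$, the factor $H|T+B|^{-1}H \cdot (H^{-1} \cdot H^{-1})$ is handled by noting $X^{-1} = H|T+B|^{-1}H$ is bounded (since $X$ is bounded and boundedly invertible, as $0 \in \rho(|T+B|)$), whence $K_1 = H U_B H^{-1} = H\widehat{Q}H^{-1} \cdot H^{-1}|T+B|H^{-1} \cdot \text{(corrections)}$ — more cleanly, from $|T+B| = HXH$ one gets $U_B = (T+B)|T+B|^{-1} = H\widehat{Q}H (HXH)^{-1} = H\widehat{Q}X^{-1}H^{-1}$, so $K_1 = HU_BH^{-1} = H(H\widehat{Q}X^{-1}H^{-1})H^{-1} = \widehat{Q}X^{-1}$ after the $H$'s telescope, which is manifestly bounded. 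For $3 \Rightarrow 4$: if $K_1 = HU_BH^{-1}$ is bounded on $\H$, then for $\xi \in \D = D(H)$ we have $U_B\xi = H^{-1}K_1 H\xi \in H^{-1}\H = D(H) = \D$, giving $U_B\D \subseteq \D$. Finally $4 \Rightarrow 1$: from $T+B = U_B|T+B|$ and $0 \in \rho(T+B)$ we get $|T+B| = U_B^{-1}(T+B) = U_B^*(T+B)$, so $|T+B|^\mez$ and $|T+B|$ relate to $T+B$; the cleanest route is to observe $\D = D(T+B)$-closure-wise is controlled by $\D = D(|T+B|^\mez)$ precisely when $U_B$ preserves $\D$ — concretely, for $\xi \in \D$, $|T+B|\xi = U_B^*(T+B)\xi$ is not directly available (as $\xi$ need not be in $D(T+B)$), so instead argue at the level of square roots: show $|T+B|^\mez U_B = U_B|T+B|^\mez$ fails in general but $U_B$ intertwines $D(|T+B|^\mez)$ with itself when $U_B\D \subseteq \D$ and $U_B^*\D \subseteq \D$ (the latter following by symmetry since $(T+B)^* = |T+B|U_B^*$ has polar decomposition with partial isometry $U_B^*$), and conclude $\D \subseteq D(|T+B|^\mez) = D(|T|^\mez)$ by Corollary \ref{cor_sqrt}. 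I expect the $4 \Rightarrow 1$ closing step to require the most care, since it runs the polar-decomposition bookkeeping backwards and must avoid assuming membership in $D(T+B)$.
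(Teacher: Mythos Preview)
Your cycle has three genuine gaps, two of which you yourself flag as delicate but do not resolve.

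\emph{The equivalence $1 \Leftrightarrow 2$.} You propose to pass from $D(|T+B|^\mez) \supseteq D(H)$ to $D(|T+B|) \supseteq D(H^2)$ via a Heinz-type argument. This is the \emph{reverse} of L\"owner--Heinz and is false in general: boundedness of $|T+B|^\mez H^{-1}$ does not imply boundedness of $|T+B| H^{-2}$ when $|T+B|$ and $H$ do not commute, because operator monotonicity of $t \mapsto t^\alpha$ fails for $\alpha > 1$. No ``moment/interpolation inequality'' rescues this. The paper avoids the issue entirely by observing
\[
X \subseteq \bigl(|T+B|^\mez H^{-1}\bigr)^{*} \bigl(|T+B|^\mez H^{-1}\bigr),
\]
so that boundedness of $X$ on its dense domain $HD(T)$ is equivalent (via closedness of $|T+B|^\mez H^{-1}$) to boundedness of $|T+B|^\mez H^{-1}$, which is exactly statement~1. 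No lifting of domain inclusions to higher powers is ever needed.

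\emph{The implication $2 \Rightarrow 3$.} Your algebra is wrong: from $U_B = H\widehat{Q}X^{-1}H^{-1}$ one gets $K_1 = HU_BH^{-1} = H^{2}\widehat{Q}X^{-1}H^{-2}$, not $\widehat{Q}X^{-1}$; the $H$'s do not telescope. The correct identity (with $Q_B := \widehat{Q}$) is $K_1 = {Q_B^*}^{-1}X$ on $D(X)$, obtained by writing $(T^*+B^*)^{-1} = H^{-1}{Q_B^*}^{-1}H^{-1}$ (Proposition~\ref{pro_RN_agg}) and $(T^*+B^*)^{-1}|T+B| = U_B$. Note the appearance of the \emph{adjoint} $Q_B^*$: that is what makes the $H$'s cancel correctly.

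\emph{Closing the cycle.} Your $4 \Rightarrow 1$ step invokes $U_B^*\D \subseteq \D$ ``by symmetry'', but this is precisely the content of the companion Lemma~\ref{cns_D>} (equivalent to the \emph{opposite} inclusion $D(|T|^\mez) \subseteq \D$) and does not follow from $U_B\D \subseteq \D$ alone. The paper instead closes via $4 \Rightarrow 3$: once $U_B\D \subseteq \D$, the closed operator $K_1 = HU_BH^{-1}$ is everywhere defined, hence bounded by the closed graph theorem.
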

		\begin{proof}
			Before to prove the equivalences we note some facts. First of all, $0\in \rho(T+B)\cap \rho(T^*+B^*)$ by Theorem \ref{th_rapp_risol} and, hence, $U_B$ is unitary. An inner product with respect to which $\O$ is solvable can be defined as $\pint_\O=\pin{H\cdot}{H\cdot}$. Moreover, $D(X)$ is dense in $\H$. Indeed, assume that $\chi\in \H$ and $\pin{H\xi}{\chi}=0$, for all $\xi \in D(T)$.
			Then, there exists $\eta \in \D$ such that $H\eta=\chi$ and hence
			$$
			0=\pin{H\xi}{\chi}=\pin{H\xi}{H\eta}=\pin{\xi}{\eta}_\O, \qquad \forall \xi\in D(T).
			$$
			But $D(T)$ is the domain of the operator $T$ associated to $\O$, hence it is dense in $\D[\pint_\O]$. It follows that $\eta=0$ and $\chi=0$.
			
			{\it 1. $\Leftrightarrow$ 2.} 
			The operator $|T+B|^\mez H^{-1}$ is densely defined on its natural domain 
			$$
			D':=\{\xi\in \H:H^{-1}\xi \in D(|T|^\mez)\},
			$$ 
			because $D'\supseteq D(X)=HD(T)$. Moreover, $|T+B|^\mez H^{-1}$ is closed.\\
			We have 
			$$
			(|T+B|^\mez H^{-1})^*(|T+B|^\mez H^{-1})\supseteq H^{-1} |T+B|^\mez |T+B|^\mez H^{-1}=X.
			$$
			Taking into account that $D(H)=\D$,
			% we have  $D(|T|^\mez)\supseteq\D$ if and only if $D'=\H$. 
			%Since $|T+B|^\mez H^{-1}$ is closed, $D(|T|^\mez)\supseteq\D$ if and only if $|T+B|^\mez H^{-1}$ is a bounded operator on $\H$. 
			by the closed graph theorem, $D(|T|^\mez)\supseteq\D$ implies that $|T+B|^\mez H^{-1}$ is bounded and in particular that $X$ is bounded. Conversely, suppose that $X$ is bounded. Therefore,
			$$
			||T+B|^\mez H^{-1}|^2=(|T+B|^\mez H^{-1})^*(|T+B|^\mez H^{-1})=\ol{X},
			$$
			the closure of $X$, which is a bounded operator on whole $\H$. Consequently, $|T+B|^\mez H^{-1}$ is defined on $D'=\H$; i.e., $D(|T|^\mez)\supseteq\D$.  
			
			{\it 2. $\Leftrightarrow$ 3.} 
			The operator $K_1$ is closed on the natural domain 
			$D(K_1)=\{\xi\in \H: U_B H^{-1}\xi \in \D \}.$
			We have $D(X)\sub D(K_1)$. Indeed, if $\xi \in D(X)=HD(T)$ then $\xi=H\eta$ with $\eta \in D(T)$ and, taking into account that $U_BD(T)=D(T^*)\cap R(U_B)\sub \D$ (see \cite[Sec. 7.1]{Schm}), $U_BH^{-1}\xi=U_B\eta \in \D$. Hence, $\xi\in D(K_1)$.\\
			Let $Q_B:=Q+H^{-1}BH^{-1}$. Then $Q_B^*$ is invertible with bounded inverse by Theorem \ref{car_Rad_Nik} and $H{Q_B^*}H=T^*+B^*$ by Proposition \ref{pro_RN_agg} and Theorem 4.11 of \cite{RC_CT}. If $\xi\in D(X)$ we get
			\begin{eqnarray*}
				{Q_B^*}^{-1}X\xi &=& {Q_B^*}^{-1}H^{-1} |T+B| H^{-1} \xi \\
				&=& HH^{-1}{Q_B^*}^{-1}H^{-1}|T+B| H^{-1} \xi  \\
				&=& H(T^*+B^*)^{-1}|T+B| H^{-1}\xi \\
				&=& HU_B H^{-1} \xi \\
				&=& K_1 \xi.
			\end{eqnarray*}
		    Clearly, since $Q_B^*\in \B(\H)$, if $K_1$ is bounded then $X$ is bounded too. Conversely, assume that $X$ is bounded. Since $K_1$ is closed and $D(X)\sub D(K_1)$ is dense (see above), $K_1$ is a bounded operator on $\H$.
		    
			{\it 3. $\Rightarrow$ 4.} It follows easily by the definition of  $D(K_1)$ and by $D(H)=\D$.
			
			{\it 4. $\Rightarrow$ 3.} It is a consequence of the closed graph theorem.
		\end{proof}

		\begin{lem}
			\label{cns_D>}
			Assume the same hypotheses of Lemma \ref{cns_D<}. 
			The following statements are equivalent.
			\begin{enumerate}
				\item $D(|T|^\mez)\subseteq\D$;	
				\item the operator $Y=H|T+B|^{-1}H$ on $\D$ is bounded;
				\item the operator $K_2=HU_B^*H^{-1}$ is bounded on $\H$;
				\item $U_B\D\supseteq\D$.				
			\end{enumerate}
		\end{lem}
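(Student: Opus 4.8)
The plan is to mirror the proof of Lemma \ref{cns_D<}, establishing this time the ``dual'' equivalences: $D(|T|^\mez)\subseteq\D$ (in place of $\supseteq$), boundedness of $Y=H|T+B|^{-1}H$ (the formal inverse of the operator $X$ there), boundedness of $K_2=HU_B^*H^{-1}$ (with $U_B^*$ in place of $U_B$), and $U_B\D\supseteq\D$. As in that proof I use the following facts: $0\in\rho(T+B)\cap\rho(T^*+B^*)$ by Theorem \ref{th_rapp_risol}, so $U_B$ is unitary and $|T+B|,|T+B|^\mez$ are boundedly invertible; $Q_B:=Q+H^{-1}BH^{-1}$, hence $Q_B^*$, is a bounded isomorphism of $\H$ with $HQ_BH=T+B$ and $HQ_B^*H=T^*+B^*$ (by Theorem \ref{car_Rad_Nik}, Proposition \ref{pro_RN_agg} and Theorem 4.11 of \cite{RC_CT}); the polar decomposition $T+B=U_B|T+B|$ yields $(T+B)^*=|T+B|U_B^*$, hence $|T+B|^{-1}=U_B^*\bigl((T+B)^*\bigr)^{-1}$ and $U_B^*D(T^*)=D(T)\subseteq\D$ (cf.\ \cite[Sec.~7.1]{Schm}); and $D(|T|^\mez)=D(|T+B|^\mez)$ by Lemma \ref{P1,2^alfa}. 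I also note that for $\eta\in\D$ one has $|T+B|^{-1}H\eta\in D(T)\subseteq\D$, so $Y$ is genuinely defined on all of $\D$.

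For \emph{1.} $\Leftrightarrow$ \emph{2.}, the pivot is the identity, valid for $\eta\in\D$ because $|T+B|^{-1}H\eta\in D(H)$,
$$\n{|T+B|^{-\mez}H\eta}^2=\pin{|T+B|^{-1}H\eta}{H\eta}=\pin{Y\eta}{\eta}.$$
If $Y$ is bounded, this shows that $\eta\mapsto|T+B|^{-\mez}H\eta$ is bounded, so for every $\phi\in\H$ and $\eta\in\D$ one has $|\pin{|T+B|^{-\mez}\phi}{H\eta}|=|\pin{\phi}{|T+B|^{-\mez}H\eta}|\le\n{\phi}\,\n{Y}^{\mez}\n{\eta}$; since $H$ is self-adjoint this forces $|T+B|^{-\mez}\phi\in D(H)=\D$, and letting $\phi$ range over $\H$ gives $D(|T|^\mez)=D(|T+B|^\mez)=R(|T+B|^{-\mez})\subseteq\D$. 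Conversely, if $D(|T|^\mez)\subseteq\D$ then the closed operator $H|T+B|^{-\mez}$ has full domain $\H$, hence is bounded, say with norm $c$; then $\pin{|T+B|^{-\mez}H\eta}{\zeta}=\pin{\eta}{H|T+B|^{-\mez}\zeta}$ gives $\n{|T+B|^{-\mez}H\eta}\le c\n{\eta}$, so that $\n{Y\eta}\le c\,\n{|T+B|^{-\mez}H\eta}\le c^2\n{\eta}$ for $\eta\in\D$.

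For \emph{2.} $\Leftrightarrow$ \emph{3.}, substituting $\bigl((T+B)^*\bigr)^{-1}=H^{-1}(Q_B^*)^{-1}H^{-1}$ and cancelling $H^{-1}H$ I obtain
$$Y=H|T+B|^{-1}H=HU_B^*\bigl((T+B)^*\bigr)^{-1}H=HU_B^*H^{-1}(Q_B^*)^{-1}=K_2(Q_B^*)^{-1};$$
here, for $\xi\in\D$, $H^{-1}(Q_B^*)^{-1}\xi=\bigl((T+B)^*\bigr)^{-1}H\xi\in D(T^*)$, so $U_B^*H^{-1}(Q_B^*)^{-1}\xi\in D(T)\subseteq\D$, i.e.\ $(Q_B^*)^{-1}\xi\in D(K_2)$ and $K_2\bigl((Q_B^*)^{-1}\xi\bigr)=Y\xi$. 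As $Q_B^*$ is a bounded isomorphism, boundedness of $K_2$ gives at once boundedness of $Y=K_2(Q_B^*)^{-1}$; conversely, boundedness of $Y$ gives $\n{K_2\eta}=\n{YQ_B^*\eta}\le\n{Y}\n{Q_B^*}\n{\eta}$ on the dense set $(Q_B^*)^{-1}\D$, and since $K_2=HU_B^*H^{-1}$ is closed on its natural domain $D(K_2)=\{\xi:U_B^*H^{-1}\xi\in\D\}$, the closed graph theorem yields $K_2\in\B(\H)$.

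For \emph{3.} $\Leftrightarrow$ \emph{4.}, first note that, $U_B$ being unitary, $U_B\D\supseteq\D$ is equivalent to $U_B^*\D\subseteq\D$. If $K_2\in\B(\H)$, then for $\eta\in\D$ we take $\xi=H\eta\in\H=D(K_2)$ and read off $U_B^*\eta=U_B^*H^{-1}\xi\in\D$; conversely, $U_B^*\D\subseteq\D$ forces $D(K_2)=\{\xi:U_B^*H^{-1}\xi\in\D\}=\H$ (because $H^{-1}\H=\D$), and then the closed operator $K_2$ is bounded by the closed graph theorem. I expect the only genuinely delicate point to be the bookkeeping of natural domains — checking that $Y$ is defined on all of $\D$, that the identity $Y=K_2(Q_B^*)^{-1}$ holds on $\D$ with $(Q_B^*)^{-1}\D\subseteq D(K_2)$, and that $H|T+B|^{-\mez}$ has domain exactly $\H$ precisely when $D(|T|^\mez)\subseteq\D$ — everything else being a routine transcription of the proof of Lemma \ref{cns_D<}.
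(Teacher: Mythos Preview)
Your proof is correct and follows essentially the same route as the paper's: the same pivot operator $H|T+B|^{-\mez}$ for \emph{1.}$\Leftrightarrow$\emph{2.}, the same identity $YQ_B^*=K_2$ on $(Q_B^*)^{-1}\D$ for \emph{2.}$\Leftrightarrow$\emph{3.}, and the same reduction to $U_B^*\D\subseteq\D$ for \emph{3.}$\Leftrightarrow$\emph{4.} The only cosmetic difference is that in \emph{1.}$\Leftrightarrow$\emph{2.} you argue via the quadratic-form identity $\pin{Y\eta}{\eta}=\n{|T+B|^{-\mez}H\eta}^2$ and a duality estimate, whereas the paper writes the operator inclusion $H|T+B|^{-\mez}(H|T+B|^{-\mez})^*\supseteq Y$; these are the same computation.
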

		\begin{proof}
		We sketch the proof which is similar to the previous one.
		
		{\it 1. $\Leftrightarrow$ 2.} 
		The operator $H|T+B|^{-\mez}$ on the natural domain is closed, but also densely defined, since	$D(|T|^\mez)\subseteq D(H|T+B|^{-\mez})$. 
		Moreover, 
		$$
		H|T+B|^{-\mez}(H|T+B|^{-\mez})^*\supseteq Y. 
		$$
		Then, $Y$ is bounded if, and only if,  $D(|T|^\mez)\subseteq\D$.
		
		{\it 2. $\Leftrightarrow$ 3.} 
		The operator $K_2=HU_B^*H^{-1}$ is defined on 
		$D(K_2)=\{\xi \in \H: U_B^*H^{-1}\xi \in \D\}$
		and it is closed.
		It results that ${Q_B^*}^{-1}\D\sub D(K_2)$. Indeed, if $\xi\in {Q_B^*}^{-1}\D$ then $\xi={Q_B^*}^{-1}H^{-1}\chi$, for some $\chi \in \H$, and 
		$$
		U_B^*H^{-1}\xi=U_B^* H^{-1}{Q_B^*}^{-1}H^{-1} \chi =U_B^* (T^*+B^*)^{-1} \chi=U_B^* \eta
		$$
		with some $\eta \in D(T^*)$. 
		Therefore, $\xi\in D(K_2)$ since $U_B^*H^{-1}\xi=U_B^* \eta\in D(T)$. \\
		Moreover, if $\xi\in D(YQ_B^*)={Q_B^*}^{-1}\D$, then $YQ_B^*\xi = K_2 \xi$.\\
		Taking into account that $D(YQ_B^*)$ is dense in $\H$ ($Q_B^*$ is an isomorphism of $\H$) $Y$ is bounded if and only if $K_2$ is bounded.
		
		{\it 3. $\Leftrightarrow$ 4.} By definition of $D(K_2)$ and since $U_B$ is unitary, both statements are equivalent to $U_B^*\D\subseteq\D$.		
		\end{proof}
		
		Combining Lemmas \ref{cns_D<} and \ref{cns_D>} we obtain the following characterization.
		
		\begin{teor}
			\label{cns_D=}
			Let $\O$ be a solvable sesquilinear form on $\D$ with respect to an inner product  and with associated operator $T$. Let 
			$$
			\O(\xi,\eta)=\pin{QH\xi}{H\eta}, \qquad \forall \xi,\eta\in \D,
			$$			
			be a Radon-Nikodym-like representation of $\O$. Suppose, moreover, that $\Up\in \Po(\O)$, $B$ is the operator associated to $\Up$, and that $T+B=U_B|T+B|$ is the polar decomposition of $T+B$. The following statements are equivalent.
			\begin{enumerate}
				\item $\O$ is hyper-solvable, i.e. $D(|T|^\mez)=\D$;	
				\item the operators $X=H^{-1}|T+B|H^{-1}$ and $Y=H|T+B|^{-1}H$ defined on $D(X)=HD(T)$ and $\D$, respectively, are bounded;
				\item the operators $K_1=HU_BH^{-1}$ and $K_2=HU_B^*H^{-1}$ are bounded on $\H$;
				\item $U_B\D=\D$.				
			\end{enumerate}
		\end{teor}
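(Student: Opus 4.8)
The plan is to derive Theorem \ref{cns_D=} by conjoining Lemmas \ref{cns_D<} and \ref{cns_D>}, which between them already carry all of the functional-analytic content. The one extra ingredient is the trivial observation that the set equality $D(|T|^\mez)=\D$ holds if and only if both inclusions $D(|T|^\mez)\supseteq\D$ and $D(|T|^\mez)\subseteq\D$ hold, and likewise $U_B\D=\D$ holds if and only if $U_B\D\subseteq\D$ and $U_B\D\supseteq\D$. So each of the four conditions in the theorem splits canonically into an ``$\supseteq$-half'' and a ``$\subseteq$-half'', and the whole task is to see that the $\supseteq$-halves correspond under Lemma \ref{cns_D<} and the $\subseteq$-halves under Lemma \ref{cns_D>}.

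Concretely, the hypotheses imposed in the theorem --- $\O$ solvable with respect to an inner product, with associated operator $T$, the given Radon-Nikodym-like representation $\O(\xi,\eta)=\pin{QH\xi}{H\eta}$, $\Up\in\Po(\O)$ with associated operator $B$, and polar decomposition $T+B=U_B|T+B|$ --- are precisely the common hypotheses of both lemmas, so both apply. By Lemma \ref{cns_D<}, the inclusion $D(|T|^\mez)\supseteq\D$ is equivalent to boundedness of $X=H^{-1}|T+B|H^{-1}$ on $HD(T)$, to boundedness of $K_1=HU_BH^{-1}$ on $\H$, and to $U_B\D\subseteq\D$. By Lemma \ref{cns_D>}, the inclusion $D(|T|^\mez)\subseteq\D$ is equivalent to boundedness of $Y=H|T+B|^{-1}H$ on $\D$, to boundedness of $K_2=HU_B^*H^{-1}$ on $\H$, and to $U_B\D\supseteq\D$.

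Conjoining the two chains of equivalences finishes the proof: $\O$ is hyper-solvable if and only if both halves hold, if and only if $X$ and $Y$ are both bounded, if and only if $K_1$ and $K_2$ are both bounded, if and only if $U_B\D\subseteq\D$ and $U_B\D\supseteq\D$, i.e. $U_B\D=\D$. I do not expect a genuine obstacle here: the delicate points --- the closed-graph arguments tying boundedness of $X$ and $Y$ to the two domain inclusions, the identities relating $X$, $Y$ to $K_1$, $K_2$ through the operator $Q+H^{-1}BH^{-1}$ and the identity $H(Q+H^{-1}BH^{-1})^*H=T^*+B^*$ (Proposition \ref{pro_RN_agg}), and the passage between $U_B\D\subseteq\D$ and $U_B^*\D\subseteq\D$ via unitarity of $U_B$ --- are all settled inside Lemmas \ref{cns_D<} and \ref{cns_D>}. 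The only thing needing a little care when writing it out is bookkeeping: conditions 2, 3, 4 of the theorem are \emph{conjunctions} of clauses that bear the same numerical label in the two lemmas, so I would state explicitly which clause of which lemma supplies each half of each equivalence, so that the chain of ``if and only if''s remains unambiguous.
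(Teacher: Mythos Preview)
Your proposal is correct and matches the paper's approach exactly: the paper states Theorem \ref{cns_D=} immediately after the sentence ``Combining Lemmas \ref{cns_D<} and \ref{cns_D>} we obtain the following characterization'' and gives no further proof, so the argument is precisely the conjunction of the two lemmas that you describe. Your additional bookkeeping remarks about matching each clause to its counterpart in the two lemmas are more explicit than what the paper writes, but add nothing beyond what is already implicit.
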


		\begin{esm}
			Let $\O$ be the solvable sesquilinear form of Example \ref{esm_simpl_2} defined on the domain 
			$\D:=\left \{f\in L^2(\C): \int_{\C}|r(z)||f(z)|^2dz< \infty \right\}$
			and $M$ be its associated operator.	Moreover, let $B$ be the operator in (\ref{B_esm_simpl_2}) and $\Up\in \Po(\O)$ be the sesquilinear form in (\ref{Up_esm_simpl_2}). The unitary operator $U_B$ in the polar decomposition of $M+B$ is defined by 
			$$
			(U_B f)(z)=\frac{r(z)\chi_{Z^c}(z)+\chi_Z(z)}{|r(z)\chi_{Z^c}(z)+\chi_Z(z)|}f(z), \qquad \forall f\in \H,
			$$
			where $Z:=\{z\in \C: |r(z)|\leq 1\}$, $\chi_Z$ and $\chi_{Z^c}$ are the characteristic functions of $Z$ and of its complement $Z^c$, respectively. It is easy to see that $U_B \D=\D$. Again we get that $\O$ is hyper-solvable.
		\end{esm}
		
		The following conditions are only sufficient to ensure that a solvable sesquilinear form is hyper-solvable.
		
		\begin{lem}
			Let $\O$ be a solvable sesquilinear form on $\D$ with respect to an inner product and with associated operator $T$. Let 
			$$
			\O(\xi,\eta)=\pin{QH\xi}{H\eta}, \qquad \forall \xi,\eta\in \D,
			$$			
			be a Radon-Nikodym-like representation of $\O$. Suppose, moreover, that $\Up\in \Po(\O)$, $B$ is the operator associated to $\Up$ and $Q_B:=Q+H^{-1}BH^{-1}$.
			If one of the following statements holds, then $\O$ is hyper-solvable.
			\begin{enumerate}
				\item $Q_B\D= \D$;
				\item $Q_B$ is positive;
				\item $T+B$ is semibounded.
			\end{enumerate}
		\end{lem}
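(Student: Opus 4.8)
I plan to handle the three sufficient conditions one at a time. Throughout I will use the identity $T+B=HQ_BH$ (valid on the natural domain $D(T+B)=\{\xi\in\D:Q_BH\xi\in\D\}$), which follows from Theorem \ref{car_Rad_Nik} applied to $\O+\Up$ together with the fact, from Theorem \ref{th_rapp_risol}, that the operator associated to $\O+\Up$ is $T+B$; I will also use that $Q_B$ is an isomorphism of $\H$ (Theorem \ref{car_Rad_Nik}), that $D(|T+B|)=D(T+B)$ (polar decomposition), that $H$ is an isometry of $\D[\noo]$ onto $\H$ with $\noo=\n{H\cdot}$, so that $HD(T)$ is dense in $\H$ (Theorem \ref{th_rapp_risol}), and that, by Corollary \ref{cor_sqrt}, hyper-solvability of $\O$ is equivalent to $D(|T+B|^\mez)=\D$.

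\textbf{Case 1: $Q_B\D=\D$.} Since $Q_B$ is injective on $\H$, it restricts to a bijection of $\D$ (hence $Q_B^{-1}\D=\D$ as well). The graph of $Q_B|_\D$ in $\D[\noo]\times\D[\noo]$ is closed, because convergence in $\D[\noo]$ implies convergence in $\H$, where $Q_B$ is bounded; by the closed graph theorem $Q_B|_\D$ and $Q_B^{-1}|_\D$ are bounded on $\D[\noo]$. Since $\noo=\n{H\cdot}$, the bound $\n{HQ_B\xi}\le C\n{H\xi}$, with $\zeta=H\xi$ running over all of $\H$, shows that $R:=HQ_BH^{-1}$ extends to an isomorphism of $\H$. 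Then $T+B=HQ_BH=RH^2$, and a direct check of the natural domains (using $Q_B\D=Q_B^{-1}\D=\D$) gives $D(T+B)=D(H^2)$, hence $D(|T+B|)=D(H^2)$. Applying Lemma \ref{P1,2^alfa} to the positive self-adjoint operators $|T+B|$ and $H^2$ with $\alpha=\mez$ yields $D(|T+B|^\mez)=D(H)=\D$, so $\O$ is hyper-solvable.

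\textbf{Cases 2 and 3.} If $Q_B\ge 0$, then, being an isomorphism, $Q_B\ge\delta I$ for some $\delta>0$, so $Q_B^\mez$ is bounded, positive and boundedly invertible, and $A:=Q_B^\mez H$ is closed with domain $\D$ and bounded inverse. Using $(Q_B^\mez H)^*=HQ_B^\mez$ one computes $A^*A=HQ_BH$ on $D(T+B)$; since $\pin{A^*A\xi}{\eta}=\pin{A\xi}{A\eta}=(\O+\Up)(\xi,\eta)$ for $\xi\in D(A^*A)$, $\eta\in\D$, and both $A^*A$ and $T+B$ have $0$ in the resolvent set, the uniqueness clause of Theorem \ref{th_rapp_risol} gives $T+B=A^*A$. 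Thus $T+B$ is positive self-adjoint, $|T+B|=T+B$, and $D(|T+B|^\mez)=D((A^*A)^\mez)=D(A)=\D$, i.e. $\O$ is hyper-solvable; this settles Case 2. For Case 3, if $T+B$ is semibounded then $\pin{(T+B)\xi}{\xi}$ is real for all $\xi\in D(T+B)$, so $T+B$ is symmetric, and with $0\in\rho(T+B)$ this forces $T+B$ to be self-adjoint with $\sigma(T+B)\subseteq[\gamma,\infty)$ for the semibound $\gamma$. Choosing any $c<\gamma$, we have $c\in\rho(T+B)$, so by Theorem \ref{th_Po_ris} the form $\Up':=\Up-c\iota$, with associated operator $B':=B-cI$ and $Q_{B'}=Q_B-cH^{-2}$, lies in $\Po(\O)$; and for $\xi\in D(T+B)$, $\pin{Q_{B'}H\xi}{H\xi}=\pin{(T+B)\xi}{\xi}-c\n{\xi}^2\ge(\gamma-c)\n{\xi}^2\ge 0$, which by density of $HD(T)$ and boundedness of $Q_{B'}$ gives $Q_{B'}\ge 0$ on $\H$. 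Case 2 applied with $\Up'$ in place of $\Up$ then shows $\O$ is hyper-solvable.

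The main obstacle is Case 1: one must turn the purely algebraic hypothesis $Q_B\D=\D$ into the operator-norm estimate that makes $R=HQ_BH^{-1}$ bounded, and then carefully verify $D(T+B)=D(H^2)$ so that the Heinz-type Lemma \ref{P1,2^alfa} becomes applicable to the pair $(|T+B|,H^2)$. Once Case 2 is set up via the factorization $T+B=(Q_B^\mez H)^*(Q_B^\mez H)$ and the identity $D((A^*A)^\mez)=D(A)$, Case 3 is a routine spectral shift reducing it to Case 2.
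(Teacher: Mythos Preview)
Your proof is correct. For Case~1 you arrive at exactly the paper's argument --- show $D(T+B)=D(H^2)$ from $Q_B\D=\D$ and then apply Lemma~\ref{P1,2^alfa} --- but the detour through the boundedness of $R=HQ_BH^{-1}$ is unnecessary: the domain identity follows directly from the natural-domain formula $D(T+B)=\{\xi\in\D:Q_BH\xi\in\D\}$ together with the fact that $Q_B$ restricts to a bijection of $\D$, so your closed-graph step and the factorization $T+B=RH^2$ play no role. For Cases~2 and~3 the paper simply cites \cite[Lemma~3.6]{GKMV}, whereas you give a self-contained argument via the factorization $T+B=(Q_B^{\mez}H)^*(Q_B^{\mez}H)$ and a spectral shift; this is a legitimate and clean way to unpack that citation, and your reduction of Case~3 to Case~2 by choosing $c<\gamma$ and verifying $Q_{B'}\ge 0$ on the dense set $HD(T)$ is sound.
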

		\begin{proof}
			Assume the first condition. We prove that $D(T)=D(H^2)$. Indeed, $\xi\in D(H^2)\sub D(H)=\D$ if and only if $H\xi\in \D$ if and only if $Q_B H\xi\in \D$, if and only if $\xi\in D(T+B)$, by the definition of domain of $T+B$.
			By Lemma \ref{P1,2^alfa} we have $D(|T|^\mez)=D(|T+B|^\mez)=D(H)=\D$.\\ For the conditions {\it 2.} and {\it 3.}, the statement follows by \cite[Lemma 3.6]{GKMV}.				
		\end{proof}

The next lemma is a consequence of Theorem \ref{cns_D=}. It will be used in the proof of Theorem \ref{2_repr_th_2} below, but it is interesting in itself.
		
		\begin{lem}
			\label{cor_D=}
			Let $\O$ be a solvable sesquilinear form on $\D$ with respect to an inner product with associated operator $T$. The following statements are equivalent.
			\begin{enumerate}
				\item $\D=D(|T|^\mez)$, i.e. $\O$ is hyper-solvable;
				\item $\D\subseteq D(|T|^\mez)\cap D(|T^*|^\mez)$;
				\item $\D\supseteq D(|T|^\mez)\cup D(|T^*|^\mez)$;
				\item $\D=D(|T^*|^\mez)$, i.e. $\O^*$ is hyper-solvable.
			\end{enumerate}
		\end{lem}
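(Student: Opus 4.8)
The plan is to reduce the four equivalences to the one-sided inclusion criteria of Lemmas \ref{cns_D<} and \ref{cns_D>}, applied to both $\O$ and $\O^*$, together with the elementary fact that a unitary $U_B$ satisfies $U_B\D\subseteq\D$ if and only if $U_B^*\D\supseteq\D$ (and $U_B\D\supseteq\D$ if and only if $U_B^*\D\subseteq\D$), which follows by applying $U_B^*$, resp.\ $U_B$, to the inclusion of subsets.

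First I would set up the data needed to invoke Lemmas \ref{cns_D<} and \ref{cns_D>} for $\O^*$. Since $\O$ is solvable with respect to an inner product, by Lemma \ref{Rad-Nik} (or Theorem \ref{car_Rad_Nik}) it has a Radon-Nikodym-like representation $\O(\xi,\eta)=\pin{QH\xi}{H\eta}$; I fix $\Up\in\Po(\O)$ with associated operator $B$, so that $0\in\rho(T+B)$ by Theorem \ref{th_rapp_risol} and hence $T+B=U_B|T+B|$ with $U_B$ unitary. By Proposition \ref{pro_RN_agg}, $\O^*(\xi,\eta)=\pin{Q^*H\xi}{H\eta}$ is a Radon-Nikodym-like representation of $\O^*$; moreover $Q^*+H^{-1}B^*H^{-1}=(Q+H^{-1}BH^{-1})^*$ is a bijection of $\H$, being the adjoint of a bijection in $\B(\H)$, so by Theorem \ref{car_Rad_Nik} applied to $\O^*$ the bounded form $\Up^*$ (with operator $B^*$) lies in $\Po(\O^*)$. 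Thus $\O^*$ is solvable with respect to an inner product, its associated operator is $HQ^*H=T^*$ (Proposition \ref{pro_RN_agg} and Theorem 4.11 of \cite{RC_CT}), and the polar decomposition of $T^*+B^*=(T+B)^*$ is $U_B^*|T^*+B^*|$, since $S=U|S|$ with $U$ unitary gives $S^*=U^*|S^*|$ with $|S^*|=U|S|U^*$.

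Next I would record the four instances of the lemmas: applying Lemma \ref{cns_D<} and Lemma \ref{cns_D>} to $\O$ gives $\D\subseteq D(|T|^\mez)\iff U_B\D\subseteq\D$ and $D(|T|^\mez)\subseteq\D\iff U_B\D\supseteq\D$; applying the same lemmas to $\O^*$ (with phase operator $U_B^*$) gives $\D\subseteq D(|T^*|^\mez)\iff U_B^*\D\subseteq\D$ and $D(|T^*|^\mez)\subseteq\D\iff U_B^*\D\supseteq\D$. Combining these with the unitary interchange $U_B\D\subseteq\D\iff U_B^*\D\supseteq\D$ and $U_B\D\supseteq\D\iff U_B^*\D\subseteq\D$ yields the two cross-relations $\D\subseteq D(|T|^\mez)\iff D(|T^*|^\mez)\subseteq\D$ and $D(|T|^\mez)\subseteq\D\iff\D\subseteq D(|T^*|^\mez)$.

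To conclude, I would observe that each of statements 1--4 is a conjunction of two of the four one-sided inclusions between $\D$ and $D(|T|^\mez)$, $D(|T^*|^\mez)$: statement 1 is $\{\D\subseteq D(|T|^\mez)\}\wedge\{D(|T|^\mez)\subseteq\D\}$; statement 2 is $\{\D\subseteq D(|T|^\mez)\}\wedge\{\D\subseteq D(|T^*|^\mez)\}$; statement 3 is $\{D(|T|^\mez)\subseteq\D\}\wedge\{D(|T^*|^\mez)\subseteq\D\}$; statement 4 is $\{\D\subseteq D(|T^*|^\mez)\}\wedge\{D(|T^*|^\mez)\subseteq\D\}$. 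Substituting the two cross-relations, every one of these conjunctions becomes $\{\D\subseteq D(|T|^\mez)\}\wedge\{D(|T|^\mez)\subseteq\D\}$, i.e.\ statement 1, which closes the circle of equivalences. The only slightly delicate point --- and it is bookkeeping rather than genuine difficulty --- is checking that $\O^*$ really satisfies all the hypotheses of Lemmas \ref{cns_D<}--\ref{cns_D>} with phase operator $U_B^*$; once that is granted, the rest is purely formal.
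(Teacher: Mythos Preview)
Your proposal is correct and follows essentially the same route as the paper: you reduce everything to Lemmas \ref{cns_D<} and \ref{cns_D>} applied to both $\O$ and $\O^*$, using that the phase in the polar decomposition of $T^*+B^*$ is $U_B^*$ and the elementary equivalence $U_B\D\subseteq\D\iff U_B^*\D\supseteq\D$ for unitary $U_B$. The only cosmetic difference is that you verify $\Up^*\in\Po(\O^*)$ via Theorem \ref{car_Rad_Nik} and the Radon--Nikodym representation of $\O^*$, whereas the paper simply cites \cite[Theorem 4.11]{RC_CT}; otherwise the arguments coincide.
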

	\begin{proof}
		Let $\Up\in \Po(\O)$ and $B$ be the bounded operator associated to $\Up$. Then $\Up^*\in \Po(\O^*)$ by \cite[Theorem 4.11]{RC_CT}. Moreover, let $T+B=U_B|T+B|$ be the polar decomposition of $T+B$, then $T^*+B^*=U_B^*|T^*+B^*|$. \\
		Hence, taking into account the following simple equivalences,
		$U_B\D=\D$ if and only if, $U_B\D\subseteq \D$ and $U_B^*\D\subseteq \D$, if and only if, $U_B\D\supseteq \D$ and $U_B^*\D\supseteq \D$,  if and only if, $U_B^*\D=\D$ and we conclude applying Lemmas \ref{cns_D<} and \ref{cns_D>}.
	\end{proof}

	From this lemma  we obtain a counterpart of Theorem \ref{2_repr_th_1}.
	
	\begin{cor}
		Let $\O$ be a hyper-solvable sesquilinear form on $\D$ and  $T$ be the operator associated to $\O$. Suppose, moreover, that $\Up\in \Po(\O)$, and $B$ is the operator associated to $\Up$. \\
		Then, there exists a unique operator $Y\in \B(\H)$ such that 
		$$
		\O(\xi,\eta)=\pin{Y|T^*+B^*|^\mez\xi}{|T^*+B^*|^\mez\eta}, \qquad \forall \xi,\eta \in \D.
		$$
	\end{cor}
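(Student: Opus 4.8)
The plan is to recognise the asserted identity as the Radon--Nikodym-like representation of $\O$ built on the positive self-adjoint operator $H:=|T^*+B^*|^\mez$, and then to invoke Lemma~\ref{Rad-Nik}. Concretely, I must check three things: that $\O$ is q-closed with respect to an inner product (so that Lemma~\ref{Rad-Nik} applies at all), that $0\in\rho(H)$, and that $D(H)=\D$; once these are in place, Lemma~\ref{Rad-Nik}(2) supplies a unique $Y\in\B(\H)$ with $\O(\xi,\eta)=\pin{YH\xi}{H\eta}$, which is exactly the claim.

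\textbf{Key steps.} First, since $\O$ is hyper-solvable, Corollary~\ref{cor_hyper_inner} gives that $\O$ is solvable (in particular q-closed) with respect to an inner product. Second, as $\Up\in\Po(\O)$ with associated operator $B$, Theorem~\ref{th_rapp_risol} yields $0\in\rho(T+B)$; passing to adjoints and using $B\in\B(\H)$, so that $(T+B)^*=T^*+B^*$, one gets $0\in\rho(T^*+B^*)$, hence $T^*+B^*$ is bounded below in modulus and $0\in\rho(|T^*+B^*|)$, so $0\in\rho(H)$. Third, because $\O$ is hyper-solvable, Lemma~\ref{cor_D=} gives that $\O^*$ is hyper-solvable too, i.e.\ $\D=D(|T^*|^\mez)$, where $T^*$ is the operator associated to $\O^*$ by Theorem~4.11 of \cite{RC_CT}. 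Applying Corollary~\ref{cor_sqrt} to the closed, densely defined operator $T^*$ and the bounded operator $B^*$ then gives
$$
D(H)=D\big(|T^*+B^*|^\mez\big)=D\big(|T^*|^\mez\big)=\D .
$$
Thus $H$ meets the hypotheses of Lemma~\ref{Rad-Nik}(2), which produces a unique $Y\in\B(\H)$ with
$$
\O(\xi,\eta)=\pin{YH\xi}{H\eta}=\pin{Y|T^*+B^*|^\mez\xi}{|T^*+B^*|^\mez\eta},\qquad\forall\xi,\eta\in\D,
$$
and the uniqueness of $Y$ follows, as in Lemma~\ref{Rad-Nik}, from the fact that $H$ maps $\D$ onto $\H$ (because $0\in\rho(H)$).

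\textbf{Main obstacle.} I do not expect a genuine difficulty: the statement is essentially Theorem~\ref{2_repr_th_1} read off the adjoint form $\O^*$ rather than off $\O$. The only step that really uses the theory developed above is the transfer of hyper-solvability from $\O$ to $\O^*$, i.e.\ the implication $\D=D(|T|^\mez)\Rightarrow\D=D(|T^*|^\mez)$, which is precisely the content of Lemma~\ref{cor_D=}. Everything else -- the location of $0$ in $\rho(T^*+B^*)$, the identification of $D(|T^*+B^*|^\mez)$ via Corollary~\ref{cor_sqrt}, and the final appeal to the Radon--Nikodym-like representation -- is routine.
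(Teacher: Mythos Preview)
Your proof is correct and follows exactly the approach the paper intends: the corollary is stated without proof as an immediate ``counterpart of Theorem~\ref{2_repr_th_1}'' obtained from Lemma~\ref{cor_D=}, and your argument carries out precisely that plan---use Lemma~\ref{cor_D=} to get $\D=D(|T^*|^\mez)$, then Corollary~\ref{cor_sqrt} and $0\in\rho(T^*+B^*)$ to verify the hypotheses on $H=|T^*+B^*|^\mez$, and finally invoke Lemma~\ref{Rad-Nik}(2), mirroring the proof of Theorem~\ref{2_repr_th_1} line by line.
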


		If $\O$ is a solvable form on $\D$ represented by a normal operator $T$, then an inclusion between the subspace $\D$ and $D(|T|^\mez)$ is sufficient to prove that $\O$ is hyper-solvable. Therefore, the following corollary extends Proposition 2.5 of \cite{FHdeSW'} and Theorem 3.2 of \cite{GKMV}.
	
	\begin{cor}
		If $\O$ is a solvable sesquilinear form on $\D$ with respect to an inner product and with normal associated operator $T$ (in particular, if $\O$ is symmetric), then the following statements are equivalent.
		\begin{enumerate}
			\item $\D=D(|T|^\mez)$, i.e. $\O$ is hyper-solvable;
			\item $\D\sub D(|T|^\mez)$;
			\item $\D\supseteq D(|T|^\mez)$.
		\end{enumerate}
	\end{cor}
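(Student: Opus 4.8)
The plan is to reduce the statement to Lemma \ref{cor_D=} by exploiting the normality of $T$. The crucial observation is that for a normal operator one has $T^*T=TT^*$, so by uniqueness of the positive square root $|T|=(T^*T)^\mez=(TT^*)^\mez=|T^*|$; in particular $D(|T|^\mez)=D(|T^*|^\mez)$. (If $\O$ is symmetric, then its associated operator $T$ is self-adjoint by \cite[Corollary 4.14]{RC_CT}, hence normal, so this case is subsumed.) Note also that $\O$ is assumed solvable with respect to an inner product, which is exactly the hypothesis needed to invoke Lemma \ref{cor_D=}, so no preliminary reduction is required there.

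Granting $D(|T|^\mez)=D(|T^*|^\mez)$, I would argue as follows. Statement 2, namely $\D\sub D(|T|^\mez)$, is then literally the same as $\D\sub D(|T|^\mez)\cap D(|T^*|^\mez)$, which is statement 2 of Lemma \ref{cor_D=}, hence equivalent to $\O$ being hyper-solvable. Likewise, statement 3, $\D\supseteq D(|T|^\mez)$, coincides with $\D\supseteq D(|T|^\mez)\cup D(|T^*|^\mez)$, i.e. statement 3 of Lemma \ref{cor_D=}, again equivalent to hyper-solvability. Since statement 1 is precisely the definition of hyper-solvability, the three conditions are equivalent, and the ``in particular'' clause follows at once.

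I do not expect any genuine obstacle here: the entire content beyond Lemma \ref{cor_D=} is the elementary identity $|T|=|T^*|$ for normal $T$, which is a one-line consequence of $T^*T=TT^*$ and the uniqueness of positive self-adjoint square roots. The only point to keep straight is bookkeeping: one must check that the equality of domains $D(|T|^\mez)=D(|T^*|^\mez)$ genuinely collapses conditions 2 and 3 of the present corollary onto conditions 2 and 3 of Lemma \ref{cor_D=} (rather than, say, onto condition 4), but this is immediate once the domain equality is in hand. This corollary then recovers, as the symmetric and normal special cases, Proposition 2.5 of \cite{FHdeSW'} and Theorem 3.2 of \cite{GKMV}.
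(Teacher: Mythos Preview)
Your proposal is correct and follows exactly the route the paper intends: the corollary is stated immediately after Lemma \ref{cor_D=} with the remark that a single inclusion suffices when $T$ is normal, and your argument spells this out by noting $|T|=|T^*|$ for normal $T$ (hence $D(|T|^\mez)=D(|T^*|^\mez)$) and then identifying conditions 2 and 3 here with conditions 2 and 3 of Lemma \ref{cor_D=}.
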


We now give the main result of this paper which generalizes to solvable sesquilinear forms Kato's second representation theorem \cite[Theorem VI.2.23]{Kato}.
		
		\begin{teor}
			\label{2_repr_th_2}
			Let $\O$ be a hyper-solvable sesquilinear form on $\D$ with respect to a norm $\noo$ and with associated operator $T$. Then 
			$$
			\O(\xi,\eta)=\pin{U|T|^\mez \xi}{|T^*|^\mez \eta}, \qquad \forall \xi,\eta \in \D,
			$$
			$$
			\O(\xi,\eta)=\pin{|T^*|^\mez U\xi}{|T^*|^\mez \eta}, \qquad \forall \xi,\eta \in \D,
			$$
			where $T=U|T|=|T^*|U$ is the polar decomposition of $T$, and $\noo$ is equivalent to the graph norms of $|T|^\mez$ and of $|T^*|^\mez$. 
		\end{teor}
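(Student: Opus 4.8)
The plan is to prove the two identities first on the dense subspace $D(T)$ using the basic representation $\O(\xi,\eta)=\pin{T\xi}{\eta}$ of Theorem \ref{th_rapp_risol}, and then extend by continuity via the norm equivalence of Proposition \ref{pro_equiv_norm}; the only external inputs are standard facts about the polar decomposition $T=U|T|=|T^*|U$. Concretely, I will use that for $\xi\in D(T)$ one has $U\xi\in D(|T^*|)$ with $|T^*|U\xi=T\xi$ (this is exactly the content of $T=|T^*|U$), and the intertwining relation $|T^*|^{1/2}U=U|T|^{1/2}$, meaning that $U$ maps $D(|T|^{1/2})$ into $D(|T^*|^{1/2})$ and the two sides agree there; the latter is the special case $g(t)=t^{1/2}$ (a function vanishing at $0$) of $g(|T^*|)U=Ug(|T|)$, see \cite[Sec. 7.1]{Schm}. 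Since $\O$ is hyper-solvable, $\D=D(|T|^{1/2})$, and by Lemma \ref{cor_D=} also $\D=D(|T^*|^{1/2})$, so every expression below makes sense for $\xi,\eta\in\D$.

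Next, for $\xi\in D(T)$ and $\eta\in\D=D(|T^*|^{1/2})$, I would argue that $|T^*|^{1/2}U\xi\in D(|T^*|^{1/2})$ (because $U\xi\in D(|T^*|)$) and hence, using that $|T^*|^{1/2}$ is self-adjoint and $\eta\in D(|T^*|^{1/2})$,
$$\O(\xi,\eta)=\pin{T\xi}{\eta}=\pin{|T^*|^{1/2}\bigl(|T^*|^{1/2}U\xi\bigr)}{\eta}=\pin{|T^*|^{1/2}U\xi}{|T^*|^{1/2}\eta}.$$
Since $\xi\in D(T)\subseteq D(|T|^{1/2})$, the intertwining relation gives $|T^*|^{1/2}U\xi=U|T|^{1/2}\xi$, so the right-hand side also equals $\pin{U|T|^{1/2}\xi}{|T^*|^{1/2}\eta}$. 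Both expressions are continuous in $\xi$ with respect to $\noo$: $\O(\cdot,\eta)$ is bounded on $\D[\noo]$, $U$ is bounded, and by Proposition \ref{pro_equiv_norm} the norm $\noo$ is equivalent to the graph norm of $|T|^{1/2}$, so $\xi\mapsto|T|^{1/2}\xi$ is $\noo$-continuous into $\H$. As $D(T)$ is dense in $\D[\noo]$ by Theorem \ref{th_rapp_risol}, both identities extend to all $\xi\in\D$, hence hold for all $\xi,\eta\in\D$; equality of the two right-hand sides for $\xi\in\D$ is once more the intertwining relation, now on the whole of $D(|T|^{1/2})=\D$.

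For the statement about norms, Proposition \ref{pro_equiv_norm} gives the equivalence of $\noo$ with the graph norm of $|T|^{1/2}$ directly. For $|T^*|^{1/2}$, note that $\O^*$ is q-closed with respect to $\noo$ and, by \cite[Theorem 4.11]{RC_CT}, is solvable with associated operator $T^*$ (hence solvable with respect to $\noo$ by Theorem \ref{th_q_cl_sol_norm_eq}), and is hyper-solvable since $\D=D(|T^*|^{1/2})$ by Lemma \ref{cor_D=}; applying Proposition \ref{pro_equiv_norm} to $\O^*$ yields the equivalence of $\noo$ with the graph norm of $|T^*|^{1/2}$. (Alternatively, with $\Up\in\Po(\O)$ and associated operator $B$, Theorem \ref{cns_D=} gives $U_B$ unitary and $U_B\D=\D$, so $|T^*+B^*|^{1/2}=U_B|T+B|^{1/2}U_B^*$ transports the norm $\xi\mapsto\n{|T+B|^{1/2}\xi}$ onto $\xi\mapsto\n{|T^*+B^*|^{1/2}\xi}$, and Corollary \ref{cor_sqrt} on both sides closes the argument.) I expect the only delicate part to be the domain bookkeeping for the unbounded operators $|T|^{1/2}$ and $|T^*|^{1/2}$ together with the justification of the polar-decomposition identities; no genuinely new ingredient beyond the results already established is required.
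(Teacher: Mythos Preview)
Your proof is correct and follows essentially the same route as the paper: establish the identities on $D(T)$ from $\O(\xi,\eta)=\pin{T\xi}{\eta}$ together with the polar-decomposition relations $T=|T^*|U$ and $|T^*|^{1/2}U=U|T|^{1/2}$, then extend to all of $\D$ by continuity using the density of $D(T)$ in $\D[\noo]$. The only cosmetic differences are that the paper packages the computation as $T=|T^*|^{1/2}U|T|^{1/2}$ and then compares the two q-closed forms directly, and it obtains the equivalence of $\noo$ with the graph norm of $|T^*|^{1/2}$ via Lemma~\ref{P1,2^alfa} (with $P_1=|T|^{1/2}$, $P_2=|T^*|^{1/2}$) rather than by passing to $\O^*$; both routes are immediate once $\D=D(|T|^{1/2})=D(|T^*|^{1/2})$ is known.
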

		\begin{proof}
			By Corollary \ref{cor_hyper_inner} and Lemma \ref{cor_D=}, $\D=D(|T|^\mez)=D(|T^*|^\mez)$. Consider the sesquilinear form $\O'$ defined as follows
			$$
			\O'(\xi,\eta)=\pin{U|T|^\mez \xi}{|T^*|^\mez \eta}, \qquad  \xi,\eta \in \D.
			$$
			By the equality (see \cite[Sect. 3]{McIntosh85})
			\begin{equation}
			\label{UT^mez}
			|T^*|^\mez U=U|T|^\mez,
			\end{equation}
			we get
			\begin{equation}
			\label{T=T^*^mezUT^mez}
			T=|T^*|U=|T^*|^\mez|T^*|^\mez U =|T^*|^\mez U |T|^\mez.
			\end{equation}
			Hence,
			\begin{eqnarray*}
			\O(\xi,\eta) &=& \pin{T\xi}{\eta} \\
			&=& \pin{|T^*|^\mez U |T|^\mez \xi}{\eta} \\
			&=& \pin{U|T|^\mez \xi}{|T^*|^\mez \eta} \\
			&=& \O'(\xi,\eta),
			\end{eqnarray*}
			for all $\xi\in D(T), \eta \in \D$. The forms $\O$ and $\O'$ are q-closed with respect to the same norm $\noo$ (which is equivalent to the graph norms of $|T|^\mez$ and of $|T^*|^\mez$ by Lemma \ref{P1,2^alfa} and Proposition \ref{pro_equiv_norm}) and, in particular, they are bounded in $\D[\noo]$. Since $D(T)$ is dense in $\D[\noo]$, $\O$ and $\O'$ coincide by continuity on the whole $\D$.	The second equality follows by (\ref{UT^mez}).
		\end{proof}

		\begin{cor}
			Let $\O$ be a hyper-solvable sesquilinear form on $\D$ with associated operator $T$. Then 
			$$
			\O^*(\xi,\eta)=\pin{U^*|T^*|^\mez \xi}{|T|^\mez \eta}, \qquad \forall \xi,\eta \in \D,
			$$
			$$
			\O^*(\xi,\eta)=\pin{|T|^\mez U^*\xi}{|T|^\mez \eta}, \qquad \forall \xi,\eta \in \D,
			$$
			where $T=U|T|=|T^*|U$ is the polar decomposition of $T$.
		\end{cor}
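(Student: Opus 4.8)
The plan is to deduce this corollary from Theorem \ref{2_repr_th_2} by applying it not to $\O$ but to the adjoint form $\O^*$. The first thing to check is that $\O^*$ again satisfies the hypotheses of Theorem \ref{2_repr_th_2}. Since $\O$ is hyper-solvable it is in particular solvable, so $\Po(\O)\neq\emptyset$; fixing $\Up\in\Po(\O)$, Theorem 4.11 of \cite{RC_CT} (recalled in the proof of Proposition \ref{pro_RN_agg}) gives $\Up^*\in\Po(\O^*)$, hence $\O^*$ is solvable, and its associated operator is $T^*$. Moreover, by Lemma \ref{cor_D=} the equality $\D=D(|T|^\mez)$ is equivalent to $\D=D(|T^*|^\mez)$, i.e.\ $\O^*$ is hyper-solvable as well. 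Thus $\O^*$ is a hyper-solvable form with associated operator $T^*$, and Theorem \ref{2_repr_th_2} applies to it.

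Next I would write down the polar decomposition of $T^*$ in the conventions of the paper. From $T=U|T|=|T^*|U$ one gets, taking adjoints, $T^*=|T|U^*=U^*|T^*|$. Since $(T^*)^*T^*=TT^*$ and $T^*(T^*)^*=T^*T$, the modulus of $T^*$ is $(TT^*)^\mez=|T^*|$ and the modulus of $(T^*)^*$ is $(T^*T)^\mez=|T|$; hence the polar decomposition of $T^*$ reads
$$
T^*=U^*|T^*|=|T|U^*,
$$
with partial isometry $U^*$, "right" modulus $|T^*|$, and "left" modulus $|T|$. Feeding $S:=T^*$, $U_S:=U^*$, $|S|=|T^*|$, $|S^*|=|T|$ into the two identities of Theorem \ref{2_repr_th_2} produces
$$
\O^*(\xi,\eta)=\pin{U^*|T^*|^\mez\xi}{|T|^\mez\eta},\qquad
\O^*(\xi,\eta)=\pin{|T|^\mez U^*\xi}{|T|^\mez\eta},
$$
for all $\xi,\eta\in\D$, which is exactly the assertion; the statement about the norm equivalence, if wanted, also transfers verbatim from Theorem \ref{2_repr_th_2} applied to $\O^*$.

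The proof is therefore essentially a bookkeeping exercise, and the only point requiring any care — the "main obstacle", though a mild one — is to track which of $|T|$ and $|T^*|$ plays the role of the modulus of $S=T^*$ and which plays the role of the modulus of $S^*$, so that $U^*$, $|T^*|^\mez$ and $|T|^\mez$ end up in the correct slots of the two inner products. Once the polar decomposition $T^*=U^*|T^*|=|T|U^*$ is correctly recorded, everything else is a direct citation of Theorem \ref{2_repr_th_2} together with Lemma \ref{cor_D=} and Theorem 4.11 of \cite{RC_CT}.
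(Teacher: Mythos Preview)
Your proposal is correct. The paper gives no explicit proof of this corollary, leaving it as an immediate consequence of Theorem \ref{2_repr_th_2}; your argument---apply Theorem \ref{2_repr_th_2} to $\O^*$, using Lemma \ref{cor_D=} and Theorem 4.11 of \cite{RC_CT} to see that $\O^*$ is hyper-solvable with associated operator $T^*$, and then record the polar decomposition $T^*=U^*|T^*|=|T|U^*$---is exactly the intended route. An even shorter variant is to take complex conjugates directly in the two identities of Theorem \ref{2_repr_th_2}: from $\O^*(\xi,\eta)=\ol{\O(\eta,\xi)}=\ol{\pin{U|T|^\mez\eta}{|T^*|^\mez\xi}}=\pin{U^*|T^*|^\mez\xi}{|T|^\mez\eta}$ one gets the first formula, and the second then follows from $|T|^\mez U^*=U^*|T^*|^\mez$ (the adjoint of (\ref{UT^mez})). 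Either way the content is the same.
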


  		As a consequence, we get the following statements which generalizes Theorem 4.2 of \cite{FHdeS}, Theorem 2.10 of \cite{GKMV} and Theorem 3.1 of \cite{Schmitz}. 
		
		\begin{cor}
			Let $\O$ be a hyper-solvable sesquilinear form on $\D$ with respect to a norm $\noo$ and with associated operator $T$. If $T$ is normal (in particular, if $\O$ is symmetric), then 
			$$
			\O(\xi,\eta)=\pin{U|T|^\mez \xi}{|T|^\mez \eta}=\pin{|T|^\mez U \xi}{|T|^\mez \eta}, \qquad \forall \xi,\eta \in \D,
			$$
			where $T=U|T|$ is the polar decomposition of $T$.
		\end{cor}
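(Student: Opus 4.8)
The plan is to obtain this as an immediate consequence of Theorem \ref{2_repr_th_2}, the point being that normality of $T$ forces the two moduli $|T|$ and $|T^*|$ to coincide. First I would recall that $|T|=(T^*T)^\mez$ and $|T^*|=(TT^*)^\mez$ by definition; when $T$ is normal one has $T^*T=TT^*$, so these two non-negative self-adjoint operators are equal and, taking the (unique) non-negative square root, $|T|=|T^*|$ as operators. In particular $D(|T|^\mez)=D(|T^*|^\mez)$, which is consistent with Lemma \ref{cor_D=} and with the fact that $\O$ is hyper-solvable.

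Then I would simply substitute $|T^*|^\mez=|T|^\mez$ into the two identities provided by Theorem \ref{2_repr_th_2}, namely $\O(\xi,\eta)=\pin{U|T|^\mez \xi}{|T^*|^\mez \eta}$ and $\O(\xi,\eta)=\pin{|T^*|^\mez U\xi}{|T^*|^\mez \eta}$, both valid for all $\xi,\eta\in \D$. This yields at once
$$
\O(\xi,\eta)=\pin{U|T|^\mez \xi}{|T|^\mez \eta}=\pin{|T|^\mez U \xi}{|T|^\mez \eta}, \qquad \forall \xi,\eta\in \D,
$$
which is exactly the asserted representation. (Equivalently, one could note that with $|T^*|=|T|$ the commutation relation (\ref{UT^mez}) reads $|T|^\mez U=U|T|^\mez$, so $U$ commutes with $|T|^\mez$ and the two expressions on the right-hand side are visibly equal.)

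Finally, for the parenthetical symmetric case I would invoke \cite[Corollary 4.14]{RC_CT}: a symmetric solvable form has a self-adjoint associated operator $T$, which is a fortiori normal, so the argument above applies verbatim. There is essentially no obstacle here; the only conceptual point worth flagging is that it is the equality $|T|=|T^*|$ of operators — and not merely the equality $D(|T|^\mez)=D(|T^*|^\mez)$ of domains — that upgrades the ``mixed'' Kato-type representation of Theorem \ref{2_repr_th_2} to the symmetric-looking form stated in the corollary.
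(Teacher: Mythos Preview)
Your proposal is correct and is precisely the intended derivation: the paper states this corollary without proof, as an immediate consequence of Theorem \ref{2_repr_th_2}, and your observation that normality gives $|T|=|T^*|$ (together with \cite[Corollary 4.14]{RC_CT} for the symmetric parenthetical) is exactly what is needed.
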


\begin{osr}
	Symmetric hyper-solvable sesquilinear forms have also a formulation given by the spectral theorem for self-adjoint operators (see \cite[Sect. 10.2]{Schm}).
\end{osr}

\begin{esm}
	One can easily check that the hyper-solvable sesquilinear form $\O_{\ull{\alpha}}$ of Example \ref{esm_simpl_1} satisfies 
	$$
	\Oa(\{\xi_n\},\{\eta_n\})=\pin{U|\Ma|^\mez \{\xi_n\}}{|\Ma|^\mez \{\eta_n\}}=\pin{|\Ma|^\mez U \{\xi_n\}}{|\Ma|^\mez \{\eta_n\}},  
	$$
	for all $\{\xi_n\},\{\eta_n\}\in \D$, where $\Ma=U|\Ma|$ is the polar decomposition of the operator $\Ma$ associated to $\O_{\ull{\alpha}}$. More precisely, for all $\{\xi_n\}\in \D$
	$$
	|\Ma|^\mez \{\xi_n\}=\{|\alpha_n|^\mez \xi_n\} \qquad \text{and} \qquad
	U\{\xi_n\}=\{\chi_n\},
	$$
	where $\chi_n=\frac{\alpha_n}{|\alpha_n|}\xi_n$ if $\alpha_n\neq 0$, $\chi_n=0$ otherwise.
\end{esm}

The sesquilinear form of Example \ref{esm_simpl_2} admits similar Kato's second type representations.

\section{Correspondence between solvable sesquilinear  forms and operators}
\label{sec:corr}

		With the help of the results of the previous section we can give a partial answer to the following question: which properties must an operator $T$ have to ensure the existence of a solvable sesquilinear form that is represented by $T$?

		\begin{teor}
			\label{th_inverse}
			Let $T$ be a densely defined, closed operator which satisfies
			\begin{enumerate}
				\item[(a)] $D(|T|^\mez)=D(|T^*|^\mez)$;
				\item[(b)] there exists $B\in \B(\H)$ such that $0\in \rho(T+B)$.
			\end{enumerate}
			Then, there exists a unique hyper-solvable sesquilinear form $\O$ with associated operator $T$.
		\end{teor}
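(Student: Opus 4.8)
The plan is to realise $\O$ in the Radon--Nikodym-like shape $\pin{QH_1\cdot}{H_2\cdot}$ built from the polar decomposition of $T+B$, so that the Corollary following Theorem \ref{th_H1_H2} supplies solvability and computes the associated operator almost for free.

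\emph{Set-up.} Put $\D:=D(|T|^\mez)$, which by (a) equals $D(|T^*|^\mez)$. Fix $B\in\B(\H)$ with $0\in\rho(T+B)$; then $0\in\rho(T^*+B^*)$, hence $0\in\rho(|T+B|)$ and $0\in\rho(|T^*+B^*|)$, and the partial isometry $U_B$ in the polar decomposition $T+B=U_B|T+B|=|T^*+B^*|U_B$ is unitary. Set $H_1:=|T+B|^\mez$ and $H_2:=|T^*+B^*|^\mez$: these are positive self-adjoint with $0\in\rho(H_i)$, and by Corollary \ref{cor_sqrt} (together with (a) for $H_2$) one has $D(H_1)=D(|T|^\mez)=\D$ and $D(H_2)=D(|T^*|^\mez)=\D$. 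By Corollary \ref{cor_sqrt} the maps $\xi\mapsto\n{H_1\xi}$ and $\xi\mapsto\n{H_2\xi}$ on $\D$ are equivalent to the graph norms of $|T|^\mez$ and $|T^*|^\mez$, which by Lemma \ref{P1,2^alfa} are in turn equivalent to each other.

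\emph{Existence.} Define on $\D$ the form
$$
\O(\xi,\eta):=\pin{U_BH_1\xi}{H_2\eta}-\pin{B\xi}{\eta}=\pin{QH_1\xi}{H_2\eta},\qquad Q:=U_B-H_2^{-1}BH_1^{-1}\in\B(\H).
$$
Since $Q+H_2^{-1}BH_1^{-1}=U_B$ is an isomorphism of $\H$, the Corollary following Theorem \ref{th_H1_H2} applies: $\O$ is solvable with respect to an inner product, $\Up:=\pin{B\cdot}{\cdot}$ belongs to $\Po(\O)$, and the operator associated to $\O$ is $H_2QH_1$ on its natural domain. It remains to identify $H_2QH_1$. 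By the computation in the proof of that Corollary, $H_2(Q+H_2^{-1}BH_1^{-1})H_1=H_2QH_1+B$, so it is enough to show $H_2U_BH_1=T+B$. The McIntosh identity (\ref{UT^mez}), applied to $T+B$, reads $H_2U_B=U_BH_1$ as an operator identity; comparing domains this forces $U_B\D=\D$. Consequently the natural domain $\{\xi\in\D:U_BH_1\xi\in\D\}$ equals $\{\xi\in\D:H_1\xi\in\D\}=D(H_1^2)=D(|T+B|)=D(T+B)$, and there $H_2U_BH_1\xi=U_BH_1^2\xi=U_B|T+B|\xi=(T+B)\xi$. Hence $H_2QH_1=(T+B)-B=T$, so $T$ is the operator associated to $\O$; and the form domain of $\O$ is $\D=D(|T|^\mez)$ by construction, so $\O$ is hyper-solvable. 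By Theorem \ref{2_repr_th_2} this $\O$ automatically coincides with $\pin{U|T|^\mez\cdot}{|T^*|^\mez\cdot}$, $T=U|T|$.

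\emph{Uniqueness.} Let $\O'$ be any hyper-solvable form with associated operator $T$. Its form domain is $D(|T|^\mez)=\D$, and by Proposition \ref{pro_equiv_norm} it is q-closed with respect to a norm equivalent to the graph norm of $|T|^\mez$; by the set-up above the same holds for $\O$. By statement $1$ of Theorem \ref{th_rapp_risol}, $\O(\xi,\eta)=\pin{T\xi}{\eta}=\O'(\xi,\eta)$ for all $\xi\in D(T)$, $\eta\in\D$; and $D(T)=D(|T|)$ is a core for $|T|^\mez$ (equivalently, use statement $2$ of Theorem \ref{th_rapp_risol}), hence dense in $\D$ for that norm. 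Since both $\O$ and $\O'$ are bounded on $\D$ in this norm, they coincide on $\D\times\D$. The step I expect to be most delicate is the domain bookkeeping in the identity $H_2U_BH_1=T+B$ --- in particular deducing $U_B\D=\D$ from the McIntosh intertwining relation --- and keeping track of the web of equivalent graph norms on $\D$, for which hypothesis (a), Corollary \ref{cor_sqrt} and Lemma \ref{P1,2^alfa} are exactly tailored; once the form has been put in the shape $\pin{QH_1\cdot}{H_2\cdot}$, everything else is handed to results already established.
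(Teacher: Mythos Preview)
Your proof is correct and follows essentially the same route as the paper: both construct the form from the polar decomposition of $T+B$ as $\pin{U_B|T+B|^\mez\cdot}{|T^*+B^*|^\mez\cdot}$, invoke Theorem~\ref{th_H1_H2} (the paper applies it to $\O_B$ directly, you via its Corollary to $\O=\O_B-\Up$), and identify the associated operator through the intertwining relation $|T^*+B^*|^\mez U_B=U_B|T+B|^\mez$, with uniqueness coming from Theorem~\ref{2_repr_th_2} (or the density argument underlying it, which you spell out). The only difference is presentational: you make the domain bookkeeping for $H_2U_BH_1=T+B$ and the uniqueness step explicit, whereas the paper packages these into the citation of~(\ref{T=T^*^mezUT^mez}) and Theorem~\ref{2_repr_th_2}.
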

		
		\begin{proof}
			Let $T+B=U_B|T+B|=|T^*+B^*|U_B$ be the polar decomposition of $T+B$. Setting $\D:=D(|T|^\mez)=D(|T^*|^\mez)$, then 
			$$\D=D(|T+B|^\mez)=D(|T^*+B^*|^\mez).$$
			We define a sesquilinear form on $\D$ by putting
			\begin{equation*}
			%\label{O_B}
			\O_B(\xi,\eta)=\pin{U_B|T+B|^\mez\xi}{|T^*+B^*|^\mez\eta}, \qquad  \xi,\eta\in \D.
			\end{equation*}
			The hypothesis ensures that $0\in \rho(|T+B|^\mez)\cap \rho(|T^*+B^*|^\mez)$ and that $U_B$ is unitary, hence by Theorem \ref{th_H1_H2}, $\O_B$ is a solvable sesquilinear form with $0\in \Po(\O_B)$. The operator associated to $\O_B$ is $T'=|T^*+B^*|^\mez U_B |T+B|^\mez$ on the natural domain
			$D(T')=\{\xi\in \D:U_B|T+B|^\mez\xi \in \D\}.$
			But, by (\ref{T=T^*^mezUT^mez}), $T'=|T^*+B^*| U_B=T+B$.\\
			The statement is proved noting that $\O:=\O_B-\Up$ (where $\Up(\cdot,\cdot)=\pin{B\cdot}{\cdot}$) is an hyper-solvable sesquilinear form with associated operator $T$.\\
			If $\O'$ is a hyper-solvable sesquilinear form represented by $T$, then  $\O=\O'$ by Theorem \ref{2_repr_th_2}. 
		\end{proof}
		
		\begin{osr}
			Condition (a) above is satisfied if $D(T)=D(T^*)$ and, in particular, if $T$ is normal.
		\end{osr}

			All the conditions on the operator $T$ listed in the statement of the previous theorem are also necessary for the existence of a hyper-solvable sesquilinear form represented by $T$. Indeed, an operator $S$ associated to a solvable form $\O$ is densely defined, closed and such that $0\in \rho(S+B)$, for some $B\in \B(\H)$, by Theorem \ref{th_rapp_risol}. Consequently, $0\in \rho(S^*+B^*)$. Finally, from Lemma \ref{cor_D=}, if $\O$ is hyper-solvable, then $D(|S|^\mez)=D(|S^*|^\mez)$. Therefore, we can formulate the following theorem.

		\begin{teor}
			\label{corr_gen}
			The map $\O\to T$ defined by Theorem \ref{th_rapp_risol} establishes a one-to-one correspondence between all hyper-solvable sesquilinear forms $\O$ and all densely defined closed operators $T$ such that $D(|T|^\mez)=D(|T^*|^\mez)$ and there exists $B\in \B(\H)$ satisfying $0\in \rho(T+B)$.
		\end{teor}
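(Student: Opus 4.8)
The plan is to read off the statement as a packaging of three facts that have already been proved: that $\O\mapsto T$ is well defined into the indicated target class of operators, that it is surjective onto that class, and that it is injective. Nothing new needs to be computed; the work is in citing the right result for each clause.

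\emph{Well-definedness.} Let $\O$ be hyper-solvable and let $T$ be its associated operator given by Theorem \ref{th_rapp_risol}. Then $T$ is densely defined and closed. Choosing any $\Up\in\Po(\O)$ with associated operator $B\in\B(\H)$, statement 3 of Theorem \ref{th_rapp_risol} gives $0\in\rho(T+B)$, and taking adjoints yields $0\in\rho(T^*+B^*)$. Since $\O$ is hyper-solvable we have $\D=D(|T|^\mez)$; moreover $\O$ is solvable with respect to an inner product by Corollary \ref{cor_hyper_inner}, so Lemma \ref{cor_D=} applies and gives $D(|T|^\mez)=D(|T^*|^\mez)$. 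Hence $T$ belongs to the target class described in the theorem, and the map $\O\mapsto T$ lands where claimed.

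\emph{Surjectivity.} Conversely, let $T$ be densely defined and closed with $D(|T|^\mez)=D(|T^*|^\mez)$ and $0\in\rho(T+B)$ for some $B\in\B(\H)$. This is exactly the hypothesis of Theorem \ref{th_inverse}, which produces a hyper-solvable sesquilinear form $\O$ whose associated operator is $T$. Thus every operator in the target class is the image of some hyper-solvable form.

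\emph{Injectivity.} Suppose $\O$ and $\O'$ are hyper-solvable forms with the same associated operator $T$. By definition of hyper-solvability both have domain $D(|T|^\mez)$, and Theorem \ref{2_repr_th_2} represents each of them on that common domain as $\pin{U|T|^\mez\xi}{|T^*|^\mez\eta}$, where $T=U|T|$ is the polar decomposition. Therefore $\O(\xi,\eta)=\O'(\xi,\eta)$ for all $\xi,\eta\in D(|T|^\mez)$, i.e. $\O=\O'$. Combining the three clauses, $\O\mapsto T$ is a bijection between hyper-solvable forms and the stated class of operators.

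The argument presents no genuine obstacle, since every ingredient is available; the only point requiring a moment's care is verifying the applicability of Lemma \ref{cor_D=} in the well-definedness step, which is secured by Corollary \ref{cor_hyper_inner}. The real content of the correspondence is concentrated in Theorems \ref{th_inverse} and \ref{2_repr_th_2}, and this final theorem merely records their combined effect.
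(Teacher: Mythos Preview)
Your proof is correct and follows essentially the same approach as the paper: well-definedness via Theorem \ref{th_rapp_risol} and Lemma \ref{cor_D=} (with Corollary \ref{cor_hyper_inner} to meet its inner-product hypothesis), surjectivity via Theorem \ref{th_inverse}, and injectivity via the representation in Theorem \ref{2_repr_th_2}. The only cosmetic difference is that the paper packages the injectivity as the uniqueness clause of Theorem \ref{th_inverse} (whose proof also invokes Theorem \ref{2_repr_th_2}), rather than citing Theorem \ref{2_repr_th_2} directly.
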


		In the case of symmetric forms we obtain the next correspondence.		

		\begin{cor}
			\label{corr_symm}
			The map $\O\to T$ defined by Theorem \ref{th_rapp_risol} establishes a one-to-one correspondence between all symmetric hyper-solvable sesquilinear forms and all self-adjoint operators.
		\end{cor}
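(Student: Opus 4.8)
The plan is to derive Corollary~\ref{corr_symm} by restricting the one-to-one correspondence of Theorem~\ref{corr_gen} to symmetric forms on the domain side; the whole task then reduces to showing that this restriction maps the symmetric hyper-solvable forms \emph{exactly} onto the self-adjoint operators. Injectivity comes for free from Theorem~\ref{corr_gen}, so it suffices to check the two inclusions.

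For the first inclusion I would use that the operator associated to a symmetric hyper-solvable form (which is in particular a symmetric solvable form) is self-adjoint: this is precisely \cite[Corollary 4.14]{RC_CT}, already invoked in the proof of Proposition~\ref{pro_semi_solv->clos}, which says that a solvable form $\O$ satisfies $\O=\O^*$ if and only if its associated operator $T$ is self-adjoint. Hence the restricted map lands among the self-adjoint operators and, contrapositively, a hyper-solvable form with non-self-adjoint associated operator is not symmetric.

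For the second inclusion, let $T=T^*$ be self-adjoint; it is then densely defined and closed. Hypothesis (a) of Theorem~\ref{th_inverse} holds trivially, and since $\rho(T)\supseteq\C\setminus\R$ we have $-i\in\rho(T)$, so $0\in\rho(T+B)$ for $B=iI$, which gives hypothesis (b). Theorem~\ref{th_inverse} therefore yields a unique hyper-solvable form $\O$ with associated operator $T$. To see that $\O$ is symmetric I would not compute with the polar decomposition --- note that the perturbation $B=iI$ used above is not symmetric, so the auxiliary form $\O_B$ in that construction need not be symmetric either --- but argue by uniqueness: by \cite[Theorem 4.11]{RC_CT} (cf.\ Proposition~\ref{pro_RN_agg}) the adjoint form $\O^*$ is again solvable with associated operator $T^*=T$, and by Lemma~\ref{cor_D=} it is again hyper-solvable; hence $\O$ and $\O^*$ are two hyper-solvable forms with the same associated operator $T$, and the uniqueness clause of Theorem~\ref{corr_gen} (equivalently, of Theorem~\ref{th_inverse}) forces $\O^*=\O$. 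Combining the two inclusions with injectivity gives the stated bijection. I do not expect a genuine obstacle here; the only slightly delicate point is exactly this last one --- verifying symmetry of the form produced by Theorem~\ref{th_inverse} when $T$ is self-adjoint --- and it is handled cleanly by the adjoint-plus-uniqueness argument rather than by unwinding the construction.
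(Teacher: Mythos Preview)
Your argument is correct and is exactly the natural way to specialise Theorem~\ref{corr_gen} to the symmetric case; the paper itself gives no separate proof, treating the corollary as immediate from Theorem~\ref{corr_gen}. The one point you handle carefully --- that the hyper-solvable form produced from a self-adjoint $T$ is symmetric --- can also be read off directly from the ``if and only if'' in \cite[Corollary 4.14]{RC_CT}, but your uniqueness argument via $\O^*$ works just as well.
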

		
		We want to emphasize that infinitely many solvable sesquilinear forms might be represented by the same operator (see Proposition 4.2 of \cite{GKMV}).
		
		Theorem \ref{th_inverse} generalizes Proposition 5.1 of \cite{FHdeS}.
		While using Corollary 7.6 of \cite{RC_CT} we get Theorem 5.2 of \cite{FHdeS} as a special case of Theorem \ref{corr_gen} and Corollary \ref{corr_symm}.

\section*{Acknowledgments}
	
The author is grateful to Prof. C. Trapani for reading the paper and for his comments. This work has been done in the framework of the project ''Problemi spettrali e di rappresentazione in quasi *-algebre di operatori'', INDAM-GNAMPA 2017.

\vspace*{0.5cm}
\textsc{Rosario Corso, Dipartimento di Matematica e Informatica, Università degli Studi di Palermo, I-90123 Palermo, Italy}

{\it E-mail address}: {\bf rosario.corso@studium.unict.it}
	
\end{document}